\providecommand\C{}
\providecommand\F{}
\providecommand\P{}
\providecommand\Q{}
\providecommand\Z{}
\renewcommand{\C}{\mathbb{C}}
\renewcommand{\F}{\mathbb{F}}
\renewcommand{\P}{\mathbb{P}}
\renewcommand{\Q}{\mathbb{Q}}
\renewcommand{\Z}{\mathbb{Z}}
\newcommand{\cF}{\mathcal{F}}
\newcommand{\cH}{\mathcal{H}}
\newcommand{\cJ}{\mathcal{J}}
\newcommand{\cL}{\mathcal{L}}
\newcommand{\cM}{\mathcal{M}}
\newcommand{\cO}{\mathcal{O}}
\newcommand{\cR}{\mathcal{R}}
\newcommand{\cW}{\mathcal{W}}
\newcommand{\cX}{\mathcal{X}}
\providecommand\i{}
\renewcommand{\i}{\imag}
\newcommand{\modp}{\mod p}
\newcommand{\Fcross}{\F_{p}^{*}}
\newcommand{\knu}{k_{r}}
\newcommand{\conj}[1]{\overline{#1}}
\newcommand{\knus}{k_{r}^{*}}
\newcommand{\Res}{\textnormal{Res}}
\newcommand{\Teich}{\textnormal{Teich}}
\newcommand{\Frob}{\textnormal{Frob}}
\newcommand{\mat}[2][ccccccccccccccccccccccccc]{
  \left(
    \begin{array}{#1}
      #2\\
    \end{array}
  \right)
}
\numberwithin{equation}{chapter}
\begin{document}

\title*{Introduction to Arithmetic Mirror Symmetry}
\author{Andrija Peruni\v{c}i\'c}
\institute{Andrija Peruni\v{c}i\'c \at Queen's University\\
Kingston, ON K7L 3N6, Canada\\
\email{perunicic@mast.queensu.ca}}
\maketitle

\abstract{We describe how to find period integrals and Picard-Fuchs
  differential equations for certain one-parameter families of
  Calabi-Yau manifolds. These families can be seen as varieties over a
  finite field, in which case we show in an explicit example that the
  number of points of a generic element can be given in terms of
  $p$-adic period integrals. We also discuss several approaches to
  finding zeta functions of mirror manifolds and their
  factorizations. These notes are based on lectures given at the Fields
  Institute during the thematic program on
  \emph{Calabi-Yau Varieties: Arithmetic, Geometry, and Physics.}}

\section{Introduction}

The mirror conjecture is an important early result \cite{LLY} in
mirror symmetry which suggests that counting rational curves on a
Calabi-Yau threefold, an enumerative problem, can be done in terms of
Hodge theory and period integrals on its mirror partner. An arithmetic
counterpart to these ideas that was introduced by Candelas, de la
Ossa, and Rodriguez-Villegas in \cite{CORV1} can be stated as
follows. Let $\cM$ denote the one-parameter family of quintic
threefolds, consisting of hypersurfaces
\begin{equation}
X_{\psi}\colon\left\{ 0=x_{1}^{5}+x_{2}^{5}+x_{3}^{5}+x_{4}^{5}+x_{5}^{5}-5\psi x_{1}x_{2}x_{3}x_{4}x_{5}\right\} \subseteq\P^{4},\label{eq:quintic-def}
\end{equation}
where we exclude those $\psi$ which give singular fibers. This family
is typically defined over $\C$, but if we take $\psi$ to be an element
of a finite field $k$, then we can consider $X_{\psi}$ as a variety
defined over $k$. It then turns out that the number of points of
$X_{\psi}$ can be given in terms of a $p$-adic version of certain
periods on $X_{\psi}$. The purpose of these notes is to describe
in detail the techniques and ideas behind this calculation and some
related work arising from the intersection of arithmetic and mirror
symmetry. 

A more detailed overview is in order. A period of $X_{\psi}$ (defined
over $\C$) is an integral $\int_{\gamma}\omega$ of the unique
holomorphic top-form $\omega$ on $X_{\psi}$ over some $3$-cycle
$\gamma$.  There are $204$ independent periods of $X_{\psi}$ in total,
owing to the fact that $\dim H_{3}(X_{\psi};\C)=204$. Four of these
$204$ period integrals can also be seen as periods of the
\emph{mirror} family $\cW$. These four period integrals are functions
of the parameter $\psi$, and are, in fact, all of the solutions of
an\emph{ }ordinary differential equation $\cL f(\psi)=0$ called the
\emph{Picard-Fuchs} equation of $\cW$, where for
$\lambda=1/(5\psi)^{5}$ and $\vartheta=\lambda\frac{\D}{\D\lambda}$ we
define
\begin{equation}
\cL:=\vartheta^{4}-5\lambda\prod_{i=1}^{4}(5\vartheta+i).\label{eq:quintic-PFE}
\end{equation}
This is a hypergeometric differential equation with fundamental solution
around $\lambda=0$ given by 
\begin{equation}
\label{eq:fundamental-period-quintic}
\varpi_{0} =\sum_{m=0}^{\infty}\frac{(5m)!}{(m!)^{5}}\lambda^{m} = \sum_{m=0}^{\infty}\frac{\Gamma(5k+1)}{\Gamma(k+1)^{5}}\lambda^{m}.
\end{equation}
Consider now each $X_{\psi}$ as a variety over the finite field $k=\F_{p}$
with $p$ elements and assume that $5\nmid(p-1)$.
The number of points $N(X_\psi)$ on $X_{\psi}$ with coordinates
in $k$ is given by the expression 
\begin{equation}
N(X_\psi)=1+p^{4}+\sum_{m=1}^{p-2}\frac{G_{5m}}{G_{m}^{5}}\Teich^{m}(\lambda),\label{eq:quintic-period-nr-points}
\end{equation}
where $\Teich(\lambda)$ is the Teichm\"uller lifting of $\lambda$
to the $p$-adic numbers $\Z_{p}$, and $G_{m}$ is a \emph{Gauss sum
}proportional to the $p$-adic gamma function. This expression for
$N(X_\psi)$ can be seen as a $p$-adic analog of the hypergeometric
series (\ref{eq:fundamental-period-quintic}). A way to illustrate
this point is to reduce (\ref{eq:quintic-period-nr-points}) modulo $p$,
\[
N(X_\psi)\equiv\sum_{m=0}^{\lfloor p/5\rfloor}\frac{(5m)!}{(m!)^{5}}\lambda^{m}\mod p,
\]
which is a truncation of (\ref{eq:fundamental-period-quintic}). In
fact, the number of $\F_{p}$-rational points on $X_{\psi}$ can be
written as a modulo $p^{5}$ expression (\ref{eq:nr-points-as-all-solns-and-semiperiod})
involving all of the solutions of (\ref{eq:quintic-PFE}), as well as 
an additional term arising from a so-called \emph{semi-period}.

Arithmetic of varieties appearing in the context of mirror symmetry
can also be studied through their zeta functions, defined for a
variety $X$ over a finite field $\F_q$ with $q=p^n$ elements by
\[
Z(X,T) = \exp \left( \sum_{r = 1}^\infty N_r(X) \frac{T^r}{r} \right),
\]
where $N_r(X)$ denotes the number of points of
$X \otimes_{\F_q} \bar \F_q$ rational over $\F_{q^r}$. It turns out
that the zeta function of $X_{\psi}$ contains all the terms appearing
in the zeta function of its mirror manifold, and the terms not
appearing in the mirror zeta function exhibit interesting factorization
properties. In the context of mirror symmetry, zeta functions were
first considered by Candelas, de la Ossa, and Rodriguez-Villegas in
\cite{CORV2}. Due to the explicit nature of their calculation and a
large overlap with point counting in terms of period integrals we will
focus on their exposition. However, we will also discuss other
approaches to calculating zeta functions and their factorizations that
are of a more conceptual nature.

The notes are organized as follows. In
Section~\ref{sec:Period-Integrals} we more carefully define period
integrals. In Section~\ref{sec:Differentials-on-Hypersurfaces} we
discuss differentials on hypersurfaces and relations between them.
These relations enable us to find Picard-Fuchs equations satisfied by
the periods, and by solving them the periods themselves, in
Sections~\ref{sec:Determining-Picard-Fuchs-Eq} and
\ref{sec:Finding-the-Periods}.  We then switch gears and talk about
counting points and what we mean by $p$-adic periods in
Section~\ref{sec:Character-Formulas}. Finally, we discuss zeta
functions of mirror manifolds and their factorizations in
Section~\ref{sec:Zeta-Functions-and}.

\section{Periods and Picard-Fuchs Equations}

\subsection{Period Integrals\label{sec:Period-Integrals}}

Let $\pi\colon\cX\to B$ be a proper submersion defining a \emph{family}
of smooth $n$-dimensional K\"ahler manifolds. Ehresmann's
Fibration Theorem \cite[Theorem 9.1]{VOIS1} then implies that for each $\psi\in B$ there exists
an open set $U$ containing $\psi$, and a diffeomorphism $\varphi$
such that the diagram 
\[
\begin{array}{ccc}
\pi^{-1}(U) & \underrightarrow{\varphi} & U\times X_{\psi}\\
\searrow & \circlearrowleft & \swarrow\\
 & U
\end{array}
\]
commutes. In other words, $\pi$ is a locally trivial fibration. In
these notes, the fiber $X_{\psi}:=\pi^{-1}(\psi)$ is a nonsingular
projective hypersurface for each $\psi\in B$. Let $\cF$ be a sheaf
on $\cX$. Mapping $\cF$ to the direct image sheaf $\pi_{*}\cF$
on $B$, determined by $\pi_{*}\cF(U):=\cF(\pi^{-1}(U))$, defines
a covariant functor from sheaves on $\cX$ into sheaves on $B$. This
functor is left exact, but in general not right exact. In fact, $R^{k}\pi_{*}\cF$
is the sheafification of the presheaf $H^{k}(\pi^{-1}(-),\cF\mid_{(-)})$.
Consider the case $k=n$ and $\cF=\underbar{\ensuremath{\C}}$, the
constant sheaf valued in $\C$. Stalks are determined on contractible
open sets, so for $U\ni\psi$ such that $\pi^{-1}(U)\cong U\times X_{\psi}$
we have 
\[
(R^{n}\pi_{*}\underbar{\ensuremath{\C}})_{\psi}\cong H^{n}(X_{\psi},\C).
\]
The groups on the right are canonically isomorphic for all $\psi\in U$,
which means that $(R^{n}\pi_{*}\underbar{\ensuremath{\C}})\mid_{U}$ defines a locally constant
sheaf on $B$, i.e., a local system $H$ of complex vector spaces.
Tensoring with the structure sheaf $\cO_{B}$, we obtain a locally
free $\cO_{B}$-module $\cH=H\otimes\cO_{B}$ which canonically admits
the \emph{Gauss-Manin connection}
$$
\nabla\colon\cH\to \cH\otimes\Omega_{B}^{1}
$$
defined by
$$
\nabla\left(\sum_{i}\alpha_{i}\sigma_{i}\right):= \sum_{i}\sigma_{i}\otimes d\alpha_{i},
$$
where $\{\sigma_{i}\}$ is any local basis of $H$, $\Omega_{B}^{1}$
is the sheaf of holomorphic $1$-forms on $B$, and $\alpha_{i}\in\cO_{B}$.
This connection can be extended to a map
$\nabla\colon \cH \otimes\Omega_{B}^{k}\to\cH\otimes\Omega_{B}^{k+1}$
by defining $\nabla(\sigma\otimes\omega)=(\nabla\sigma)\wedge\omega$. More details are available in \cite{VOIS1}, for instance.

For any $\psi\in U$, choose a basis of $n$-cycles $\{\gamma_{i}\}$
on $X_{\psi}$ such that the corresponding homology classes generate
$H_{n}(X_{\psi};\mathbb{C})$. We can choose this basis to be dual
to $\{\sigma_{i}\}$ and extend it to nearby fibers. For $s(\psi)\in\Gamma(U,\cH)$
varying holomorphically and $\gamma$ a homology class, we obtain
a holomorphic function $\langle s(-),\gamma\rangle\colon U\to\C$
via the Poincar\'e pairing, 
\[
\langle s(\psi),\gamma\rangle=\int_{\gamma}s(\psi).
\]
The sheaf generated by such functions is called the \emph{period sheaf.
}By the de Rham theorem \cite[Section 4.3.2]{VOIS1} we can think of $s(\psi)$ as a holomorphic
family of differential forms.
\begin{definition}
Let $\omega$ be a holomorphic $n$-form on an $n$-dimensional complex
manifold $X$. Integrals of the form 
\[
\int_{\gamma}\omega\quad\text{for }\gamma\in H_{n}(X;\C)
\]
are called \emph{period integrals (periods)} of $X$ with respect
to $\omega$.
\end{definition}
Extending the (co)homology basis to all of $B$ can lead to nontrivial
monodromy on the fibers of $\cX$, which will in turn induce monodromy
on the periods. However, in these notes we are only interested in the periods locally.
In the case that the parameter space $B$ is one dimensional, the
Gauss-Manin connection is locally given by differentiation $\nabla_{\psi}:=\frac{\D}{\D\psi}$
with respect to the parameter $\psi\in B$, and satisfies

\[
\frac{\D}{\D\psi}\int_{\gamma_{i}}\omega(\psi)=\int_{\gamma_{i}}\frac{\D}{\D\psi}\omega(\psi).
\]
From here on, we are working only with one-parameter families. 
\begin{proposition}
The periods with respect to $\omega(\psi)$ satisfy an ordinary differential
equation of the form 
\[
\frac{\D^{s}f}{\D\psi^{s}}+\sum_{j=0}^{s-1}C_{j}(\psi)\frac{\D^{j}f}{\D\psi^{j}}=0,
\]
where $s$ is a natural number. This equation is called the Picard-Fuchs equation for $\omega(\psi)$.
\end{proposition}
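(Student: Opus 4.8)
The plan is to reduce the analytic statement to a piece of linear algebra over a function field, using only the finite-dimensionality of the cohomology together with the compatibility of differentiation and integration recorded just above the proposition. Write $\omega^{(j)} := \nabla_{\psi}^{j}\,\omega(\psi)$, regarded as a section of $\cH$ over a small contractible open set $U \subseteq B$; since $B$ is one-dimensional, $\nabla_{\psi}$ is the ordinary derivative $\frac{\D}{\D\psi}$, and each stalk of $\cH$ is identified with $H^{n}(X_{\psi};\C)$, which is finite-dimensional, say of dimension $b$. First I would pass to the field $\cK$ of meromorphic functions on $U$ and form the localization $\cH \otimes_{\cO_{B}} \cK$, a $\cK$-vector space of dimension $b$. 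The $b+1$ classes $[\omega^{(0)}], \ldots, [\omega^{(b)}]$ therefore cannot be $\cK$-linearly independent.

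Next I would choose $s \leq b$ minimal so that $[\omega^{(0)}], \ldots, [\omega^{(s)}]$ are $\cK$-linearly dependent. By minimality the coefficient of $[\omega^{(s)}]$ in the resulting relation is nonzero, so dividing through by it produces meromorphic functions $C_{0}(\psi), \ldots, C_{s-1}(\psi)$ satisfying
\[
[\omega^{(s)}] + \sum_{j=0}^{s-1} C_{j}(\psi)\,[\omega^{(j)}] = 0
\]
in $\cH \otimes_{\cO_{B}} \cK$. Concretely this asserts that the $n$-form $\omega^{(s)} + \sum_{j} C_{j}\,\omega^{(j)}$ represents the zero class in de Rham cohomology, i.e.\ it is exact on $X_{\psi}$ for all $\psi$ in a dense open subset of $U$. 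The crucial feature is that this is a single relation at the level of sections of $\cH$, hence independent of any chosen cycle.

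Then I would integrate this exact form against each basis cycle $\gamma_{i}$. By Stokes's theorem the integral of an exact form over a cycle vanishes, so $\int_{\gamma_{i}}\bigl(\omega^{(s)} + \sum_{j} C_{j}(\psi)\,\omega^{(j)}\bigr) = 0$. Applying the identity $\frac{\D}{\D\psi}\int_{\gamma_{i}}\omega(\psi) = \int_{\gamma_{i}}\frac{\D}{\D\psi}\omega(\psi)$ repeatedly gives
\[
\frac{\D^{j}}{\D\psi^{j}}\int_{\gamma_{i}}\omega(\psi) = \int_{\gamma_{i}}\nabla_{\psi}^{j}\,\omega(\psi) = \int_{\gamma_{i}}\omega^{(j)},
\]
so that, setting $f(\psi) := \int_{\gamma_{i}}\omega(\psi)$, the vanishing integral becomes exactly $\frac{\D^{s}f}{\D\psi^{s}} + \sum_{j=0}^{s-1} C_{j}(\psi)\frac{\D^{j}f}{\D\psi^{j}} = 0$. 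Since the same $C_{j}$ arose from a cohomological relation valid against every cycle, all the periods with respect to $\omega(\psi)$ satisfy this one equation simultaneously.

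The hard part will be justifying the linear-algebra step honestly: one must be able to compare the forms $\omega^{(j)}$ as elements of a \emph{single} fixed $b$-dimensional space while $\psi$ varies, and to conclude that the dependence has meromorphic (not merely pointwise) coefficients. This is precisely what the local system $H$ and the flatness of the Gauss-Manin connection supply, since parallel transport canonically identifies the fibers $H^{n}(X_{\psi};\C)$ over $U$ and makes the $\cK$-linear structure meaningful. A secondary subtlety is that the normalization to a monic leading term forces one to discard the locus where the leading coefficient vanishes; this is why the $C_{j}(\psi)$ are allowed poles and why the equation is asserted on a dense open subset, which suffices as we work only locally in $\psi$.
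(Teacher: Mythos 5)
Your proof is correct, and it runs on the dual side of the pairing from where the paper works. The paper's argument (following Morrison) never mentions cohomology classes: it forms the $\C^{r}$-valued vectors $v_{j}(\psi)$ whose entries are the $j$-th derivatives of the periods over a basis of cycles, observes that $r+1$ such vectors must be dependent, takes the smallest $s$ with $v_{s}(\psi)\in\mathrm{span}\{v_{0}(\psi),\ldots,v_{s-1}(\psi)\}$, and reads off the equation entrywise. You instead find the dependence upstairs, among the classes $[\nabla_{\psi}^{j}\omega]$ in $\cH\otimes_{\cO_{B}}\cK$, and only then pair with cycles, using exactness and Stokes to make each period satisfy the relation. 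Since pairing against a full basis of cycles identifies a class in $H^{n}(X_{\psi};\C)$ with its period vector, the two dependences are literally equivalent; but the packaging buys different things. The paper's route is more elementary — pure finite-dimensional linear algebra on numerical vectors, no de Rham theorem, no flatness of the Gauss-Manin connection, no Stokes. Your route is more honest on two points the paper glosses: it writes $v_{j}(\psi)\in\C(\psi)^{r}$ even though period integrals are not rational functions of $\psi$, and it passes from pointwise spans $V_{i}(\psi)$ to a single relation with \emph{function} coefficients via the phrase ``vary together smoothly,'' whereas your localization at the generic point (dependence over the field $\cK$, leading coefficient nonzero by minimality, Cramer-type division giving meromorphic $C_{j}$) makes both the meromorphy of the coefficients and the uniformity of the single relation over all cycles transparent — exactly the two subtleties you flag at the end. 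Both arguments yield the same bound $s\leq b=r$.
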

\begin{proof}[given in \cite{MOR}]
\smartqed
Let $\psi$ be the parameter on an open set $U\subseteq B$,
and for $j\in\mathbb{Z}$ define 
\[
v_{j}(\psi)=\frac{\D^{j}}{\D\psi^{j}}\left(\begin{array}{c}
\int_{\gamma_{1}}\omega(\psi)\\
\vdots\\
\int_{\gamma_{r}}\omega(\psi)
\end{array}\right)\in\mathbb{C}(\psi)^{r}.
\]
For $i\in\mathbb{N}^{+}$ and nearby values of $\psi$, the vector
spaces 
\[
V_{i}(\psi):=\text{span}\left\{ v_{0}(\psi),\ldots,v_{i}(\psi)\right\} 
\]
vary together smoothly with respect to $\psi$. Since for a particular
value of $\psi$ each $V_{i}(\psi)\in\mathbb{C}^{r}$, we also have
that $\text{dim}V_{i}(\psi)\leq r.$ Therefore, there is a smallest
$s\leq r$ such that $v_{s}(\psi)\in\text{span\ensuremath{\left\{ v_{0}(\psi),\ldots,v_{s-1}(\psi)\right\} }}$,
giving the equation 
\[
v_{s}(\psi)=-\sum_{i=0}^{s-1}C_{j}(\psi)v_{j}(\psi)
\]
satisfied by $\int_{\gamma_{i}}\omega(\psi)$ for each $\gamma_{i}$,
as claimed.
\qed
\end{proof}
Picard-Fuchs equations can in general have non-period solutions, 
but we will not encounter them in these notes.

\subsection{Differentials on Hypersurfaces\label{sec:Differentials-on-Hypersurfaces}}

Let $\pi\colon\cX\to B$ be a one-parameter family of hypersurfaces
$\{X_{\psi}\}_{\psi\in B}$ in projective space, and let $\omega=\omega(\psi)$
be a top-form on the family. A basic strategy for finding Picard-Fuchs
equations is to express the forms $\frac{\D^{s}\omega}{\D\psi^{s}}$
in terms of a particular basis of forms on $X_{\psi}$, and exploit this description
to find relations between them.
We will now show how to find a basis of forms on $X_{\psi}$
in the first place, by relating them via \emph{residue maps} to 
\emph{rational} forms on projective space which we can write down explicitly.

\subsubsection{The Adjunction Formula and Poincar\'e Residues}

We begin by defining the residue map for differentials with a
simple pole in projective space. 
Throughout this section, $Y$ denotes
an $n$-dimensional compact complex manifold and $X$ a hypersurface
on $Y$. An example to keep in mind is $Y=\P^{4}$
 and $X\subset\P^{4}$ a generic element of  (\ref{eq:quintic-def}). 
A reference for this section is \cite{GRHA}. Recall that the \emph{normal bundle} on $X$ is given by the quotient
$N_{X}=T_{Y}\mid_{X}/T_{X}$ of tangent bundles, that its dual $N_{X}^{*}$
is called the \emph{conormal bundle}, and that the \emph{canonical bundle}
of (any manifold) $X$ is defined as 
\[
K_{X}:=\bigwedge^{n}\Omega_{X}^{1}=\Omega_{X}^{n},
\]
where $\Omega_{X}^{1}=T_{X}^{*}$ . The sections of the
canonical bundle are given by holomorphic
$n$-forms on $X$, which locally look like $\omega=f(z)\D z$, where
$z$ is a local coordinate and $f(z)$ is holomorphic. In the discussion
that follows, $[X]$ is the line bundle associated with the divisor
$X$. 
\begin{proposition}[The Adjunction Formula]
\label{prop:adjunction-formula} For $Y$
and $X$ defined as above we have the isomorphism 
\[
K_{X}\cong \left. K_{Y}\right|_{X}\otimes N_{X}\,.
\]
\end{proposition}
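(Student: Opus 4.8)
The plan is to prove the Adjunction Formula $K_X \cong K_Y|_X \otimes N_X$ by constructing a short exact sequence of bundles on $X$ and then taking top exterior powers.

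First I would establish the fundamental short exact sequence of vector bundles on $X$ that relates the tangent bundles. By definition of the normal bundle $N_X = T_Y|_X / T_X$, we have the exact sequence
\[
0 \to T_X \to T_Y|_X \to N_X \to 0.
\]
This is the geometric starting point: it says that at each point of $X$, the tangent space of $Y$ splits (as an extension) into the tangent directions along $X$ and the normal directions.

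Next I would dualize. Applying $(-)^*$ to a short exact sequence of vector bundles reverses the arrows (since the sequence is locally split), yielding
\[
0 \to N_X^* \to \Omega_Y^1|_X \to \Omega_X^1 \to 0,
\]
where I use $\Omega^1 = T^*$ as in the text. The key step is then the standard fact that for a short exact sequence $0 \to A \to E \to B \to 0$ of vector bundles, the top exterior power of the middle term factors as $\bigwedge^{\mathrm{top}} E \cong \bigwedge^{\mathrm{top}} A \otimes \bigwedge^{\mathrm{top}} B$. Applying this to the dualized sequence with $E = \Omega_Y^1|_X$ of rank $n$, $A = N_X^*$ of rank $1$, and $B = \Omega_X^1$ of rank $n-1$, and recalling the definition $K_X = \bigwedge^{n-1}\Omega_X^1$ and $K_Y = \bigwedge^n \Omega_Y^1$, I obtain
\[
K_Y|_X \cong N_X^* \otimes K_X.
\]
Tensoring both sides by $N_X$ and using $N_X^* \otimes N_X \cong \cO_X$ (the trivial bundle, since $N_X$ is a line bundle) gives the claimed isomorphism $K_X \cong K_Y|_X \otimes N_X$.

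The main obstacle, and the one step deserving genuine justification rather than invocation, is the multiplicativity of the top exterior power over a short exact sequence. I would justify it by choosing a local splitting $E \cong A \oplus B$ (available since we work with vector bundles over a manifold, so the sequence is locally split), under which $\bigwedge^{\mathrm{top}} E \cong \bigwedge^{\mathrm{top}}(A \oplus B) \cong \bigwedge^{\mathrm{top}} A \otimes \bigwedge^{\mathrm{top}} B$ by the standard decomposition of exterior powers of a direct sum, together with a rank count selecting the top-degree summand. The one subtlety is checking that these local isomorphisms glue: the transition functions on overlaps involve the off-diagonal (splitting) terms, but because only the determinant (top exterior power) enters, those off-diagonal contributions are upper-triangular and do not affect the determinant, so the gluing is consistent and yields a global isomorphism of line bundles.
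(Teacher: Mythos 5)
Your proof is correct and follows essentially the same route as the paper: both rest on the conormal exact sequence $0 \to N_X^* \to \Omega_Y^1|_X \to \Omega_X^1 \to 0$, take top exterior powers to get $K_Y|_X \cong N_X^* \otimes K_X$, and tensor with $N_X$. The only difference is that you supply extra detail the paper leaves implicit --- deriving the conormal sequence by dualizing the tangent sequence, and justifying the multiplicativity of the top exterior power via local splittings and the block-triangular determinant argument --- both of which are sound.
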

\begin{proof}
\smartqed
From the conormal exact sequence for $X$, 
\[
0\to N_{X}^{*}\to\Omega_{Y}^{1}\mid_{X}\to\Omega_{X}^{1}\to0,
\]
we have that 
\[
K_{Y}|_{X}\cong\bigwedge^{n}\left(\Omega_{Y}^{1}|_{X}\right) \cong N_{X}^{*}\otimes\bigwedge^{n-1}\Omega_{X}^{1} \cong N_{X}^{*}\otimes K_{X}.
\]
Tensoring with $N_{X}$ gives the result.
\qed
\end{proof}
The map on sections corresponding to the adjunction formula is called
the Poincar\'e residue map. To describe it, we need to set up some notation
and observe a few basic facts. Denote by $\Omega_{Y}^{n}(X)$ the
sheaf of meromorphic differentials on $Y$ with a pole of order one
along $X$. Tensoring by a section of $[X]$ provides the isomorphism
\[
\Omega_{Y}^{n}(X)\cong\Omega_{Y}^{n}\otimes[X],
\]
where we are abusing notation and writing $\Omega_{Y}^{n}\otimes[X]$
for $\cO(\Omega_{Y}^{n}\otimes[X])$. The line bundle $[X]$ is given
by transition functions $g_{ij}=f_{i}/f_{j}$, where $f_{i}$ and
$f_{j}$ are local functions of $X$ on open sets $U_{i}$ and $U_{j}$
with nontrivial intersection. Using the product rule then shows that
a section $\D f_{i}$ of the conormal bundle on $X$ can be written
as $g_{ij}\, \D f_{j}$, which means that $[X]\otimes N_{X}^{*}$ has
a nonzero global section $\{\D f_{i}\}$ and is consequently trivial.
Dualizing, we obtain $N_{X}=[X]\mid_{X}$. The above isomorphism and
Proposition \ref{prop:adjunction-formula} then imply that sections
of $\Omega_{Y}^{n}(X)$ correspond to sections of $\Omega_{X}^{n-1}$.
The former are locally given by meromorphic $n$-forms with a single
pole along $X$ and holomorphic elsewhere, 
\[
\omega=\frac{g(z)}{f(z)}\D z_{1}\wedge\ldots\wedge \D z_{n},
\]
where $z=(z_{1},z_{2},\ldots z_{n})$ are the local coordinates on
$Y$, and $X$ is locally given by $f(z).$ If we write $\D f=\sum_{i=1}^{n}\frac{\partial f}{\partial z_{i}}\D z_{i}$,
it follows that for any $i$ such that $\frac{\partial f}{\partial z_{i}}\neq0$,
the form $\omega'$ on $X$ defined by
\begin{equation}
\omega'=(-1)^{i}\frac{g(z)\, \D z_{1}\wedge\ldots\wedge\widehat{\D z_{i}}\wedge\ldots\wedge \D z_{n}}{\partial f/\partial z_{i}}\label{eq:PoincareResidueFormula}
\end{equation}
satisfies 
\[
\omega=\frac{\D f}{f}\wedge\omega'.
\]
The \emph{Poincar\'e residue map} $\text{Res}\colon\Omega_{Y}^{n}(X)\to\Omega_{X}^{n-1}$
can then locally be given by $\omega\mapsto\omega'\mid_{f=0}$. 
\begin{example}
\label{expl:FermatResidueCalculation} Let $\P^2$ have coordinates $[x_1 : x_2 : x_3 ]$. The \emph{Fermat family of elliptic curves} is the one-parameter family of hypersurfaces $Z_{\psi} \subset \P^2$ given by
\[
Z_{\psi}\colon\{F_{\psi}:=x_{1}^{3}+x_{2}^{3}+x_{3}^{3}-3\psi x_{1}x_{2}x_{3}=0\}
\]
for $\psi\in\C\setminus\{\xi_{1},\xi_{2},\xi_{3}\}$, where we exclude
$\xi_{n}=\left(\eul^{2\pi\i/3}\right)^{n}$ as values of the parameter
$\psi$ since they yield singular fibers. Let 
\[
\omega=\frac{1}{F_{\psi}}\left(x_{1}\D x_{2}\wedge \D x_{3}-x_{2}\D
  x_{1}\wedge \D x_{3}+x_{3}\D x_{1}\wedge \D x_{2}\right)
\]
be a section of $\Omega_{\P^{2}}^{2}(Z_{\psi})$. We will directly
compute $\omega'$ on $U_{3}=\{[x_{1}\colon x_{2}\colon x_{3}]\mid x_{3}\neq0\}$
with coordinates $z_{1}=x_{1}/x_{3}$ and $z_{2}=x_{2}/x_{3}$. Since

\[
\D z_{1}=\frac{\partial z_{1}}{\partial x_{1}}\D x_{1}+\frac{\partial z_{2}}{\partial x_{2}}\D x_{2}=\frac{x_{3}\D x_{1}-x_{1}\D x_{3}}{x_{3}^{2}}\quad\text{and}\quad \D z_{2}=\frac{x_{3}\D x_{2}-x_{2}\D x_{3}}{x_{3}^{2}},
\]
we have 
\[
\D z_{1}\wedge \D z_{2}=\frac{1}{x_{3}^{3}}\left(x_{1}\D x_{2}\wedge \D x_{3}-x_{2}\D x_{1}\wedge \D x_{3}+x_{3}\D x_{1}\wedge \D x_{2}\right).
\]
If we let $f:=F_{\psi}\mid_{U_{3}}$, then it follows that 
\begin{align*}
\omega=\frac{x_{3}^{3}}{F_{\psi}}\ \D z_{1}\wedge \D z_{2}= & \frac{x_{3}^{3}}{x_{3}^{3}(\frac{x_{2}^{3}}{x_{3}^{3}}+\frac{x_{2}^{3}}{x_{3}^{3}}+1-3\psi\frac{x_{1}x_{2}x_{3}}{x_{3}^{3}})}\ \D z_{1}\wedge \D z_{2}\\
= & \frac{1}{f}\ \D z_{1}\wedge \D z_{2}.
\end{align*}
We wish to solve for $A(z_{1},z_{2})$ and $B(z_{1},z_{2})$ in $\omega'=A\, \D z_{1}+B\, \D z_{2}$
satisfying 
\[
\omega'\wedge\frac{\D f}{f}=\frac{1}{f}\D z_{1}\wedge \D z_{2}.
\]
Taking $B=0$ and evaluating

\[
A\, \D z_{1}\wedge\frac{1}{f}\left(\frac{\partial f}{\partial z_{1}}\, \D z_{1}+\frac{\partial f}{\partial z_{2}}\, \D z_{2}\right)
\]
yields the relation

\[
\frac{\frac{\partial f}{\partial z_{2}}A}{f}\ \D z_{1}\wedge \D z_{2}=\frac{1}{f}\, \D z_{1}\wedge \D z_{2},
\]
which implies $A=\frac{1}{\frac{\partial f}{\partial z_{2}}}$ and
consequently $\omega'=\frac{\D z_{1}}{\frac{\partial f}{\partial z_{1}}}$.\end{example}
\begin{remark}
Another way to realize the Poincar\'e residue map is as integration
over a tube $\tau(X)$ along the hypersurface $X$. The map $\text{Res}\colon\ H^{n}(\mathbb{P}^{n}\setminus X)\mapsto H^{n-1}(X)$
is given by 
\[
\omega\mapsto\omega'=\frac{1}{2\pi \i}\int_{\tau(X)}\omega.
\]

\end{remark}

\subsubsection{Higher Order Poles and Reduction of Pole Order }

In this section we will generalize the residue map to rational forms
with higher order poles in order to later more easily find Picard-Fuchs
equations. Let $\mathbb{P}^{n}$ have coordinates $[x_{0}:\ldots:x_{n}]$
and $J\in\mathcal{J}=\left\{ (j_{1},\ldots,j_{k})\colon j_{1}<j_{2}<\ldots<j_{k}\right\} $.
Consider a rational $k$-form on $\mathbb{C}^{n+1}$ given by 
\[
\phi=\frac{1}{B(x)}\sum_{J}A_{J}(x)\D x_{J},
\]
where $x=(x_{0},\ldots,x_{n})$, $\D x_{J}=\D x_{j_{1}}\wedge \D x_{j_{2}}\wedge\ldots\wedge \D x_{j_{k}}$,
and $A_{J},B$ are homogeneous polynomials. By \cite{GRIF1}, this $k$-form comes from
a $k$-form on $\mathbb{P}^{n}$ if and only if $\text{deg}\, B(x)=\text{deg}\, A_{J}(x)+k$
and $\theta(\phi)=0$, where $\theta:=\sum_{i=0}^{n}x_{i}\frac{\partial}{\partial x_{i}}$
is the Euler vector field. This fact allows us to express
rational forms on $\mathbb{P}^{n}$ in a way suitable for later calculations. 
\begin{lemma}
\label{thm:RationalFormsPn}Rational $(n+1-l)$-forms on $\mathbb{P}^{n}$
may all be written as 
\[
\omega=\frac{1}{B(x)}\sum_{J\in\mathcal{J}}\left[(-1)^{\sum_{i=1}^{l}j_{i}}\left(\sum_{i=1}^{l}(-1)^{i}x_{j_{i}}A_{j_{1}\ldots\hat{j_{i}}\ldots j_{l}}(x)\right)\right]\D x_{\hat{J}},
\]
where
$\text{deg}\, B=\text{deg}\,
A_{j_{1}\ldots\hat{j_{i}}\ldots j_{l}}+(n+2-l)$,
and $\D x_{\hat{J}}$ denotes the $(n+1-l)$-form with $\D x_{j}$
omitted if $j\in J$
\end{lemma}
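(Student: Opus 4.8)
The plan is to recognize the forms in the statement as contractions of rational $(n+2-l)$-forms on the affine cone by the Euler vector field $\theta=\sum_{i=0}^{n}x_{i}\,\partial/\partial x_{i}$, and then to invoke in both directions the criterion quoted from \cite{GRIF1} above (homogeneity of weight $0$ together with $\theta(\phi)=0$, where $\theta(\cdot)=\iota_\theta$ is contraction with the Euler field). First I would check the easy direction: if $\eta=\frac{1}{B}\sum_{M}A_{M}\,\D x_{\hat M}$ is any rational $(n+2-l)$-form, with $M$ ranging over increasing $(l-1)$-tuples and $\D x_{\hat M}$ the wedge omitting the $\D x_{m}$ with $m\in M$, then $\omega:=\iota_\theta\eta$ is horizontal because $\iota_\theta^{2}=0$. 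Writing $\hat M=(p_{1}<\cdots<p_{n+2-l})$ and expanding $\iota_\theta(\D x_{\hat M})=\sum_{a}(-1)^{a-1}x_{p_{a}}\,\D x_{\hat M\setminus\{p_{a}\}}$, then collecting the result according to the $l$-tuple $J=M\cup\{p_{a}\}$, produces exactly a coefficient of the advertised shape $\pm\sum_{i}(-1)^{i}x_{j_{i}}A_{j_{1}\ldots\hat{j_{i}}\ldots j_{l}}$.

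The substance of the lemma is the converse, that every rational $(n+1-l)$-form on $\mathbb{P}^{n}$ arises this way, and here the engine is Cartan's formula together with Euler's identity. Writing such a form as $\omega=N/B$ with $N$ a polynomial $(n+1-l)$-form, the degree hypothesis says $N$ is homogeneous of weight $b:=\deg B$ (each $\D x_{i}$ carrying weight one), and the criterion says $\iota_\theta N=0$. Since the Lie derivative acts on a weight-$b$ form by $\mathcal{L}_\theta N=bN$, Cartan's formula $\mathcal{L}_\theta=\D\,\iota_\theta+\iota_\theta\,\D$ collapses to $bN=\iota_\theta(\D N)$, whence $N=\iota_\theta\bigl(\tfrac1b\D N\bigr)$ and $\omega=\iota_\theta\eta$ with $\eta=\frac{1}{bB}\,\D N$. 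Expanding $\eta$ in the basis $\{\D x_{\hat M}\}$ defines the polynomials $A_{M}$, and because $\D$ preserves weight each $A_{M}$ has degree $b-(n+2-l)$, which is precisely the asserted condition $\deg B=\deg A_{j_{1}\ldots\hat{j_{i}}\ldots j_{l}}+(n+2-l)$. Feeding this $\eta$ through the computation of the first paragraph then yields the stated formula for $\omega$.

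I expect the genuine obstacle to be purely combinatorial: tracking the two sources of sign — the alternating sign from $\iota_\theta$ acting on an ordered wedge, and the sign incurred when the omitted-index set is re-sorted into increasing order — and checking that they assemble into the single clean factor $(-1)^{\sum_{i=1}^{l}j_{i}}$. The identity that makes this work is that the position of $j_{i}$ within the increasing tuple $\hat M=\hat J\cup\{j_{i}\}$ equals $j_{i}-i+2$, so the per-term sign is $(-1)^{j_{i}-i+1}$; one then verifies that the residual $i$-dependence cancels, so that the discrepancy with the target signs is absorbed into a weight-independent relabelling of each $A_{M}$ by a sign, which is harmless since the $A_{M}$ are arbitrary. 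A minor edge case to dispose of is $b=0$: then $N$ has constant coefficients while $\iota_\theta N$ has degree-one coefficients, so $\iota_\theta N=0$ forces $N=0$ and $\omega=0$, consistent with the formula.
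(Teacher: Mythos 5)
Your proposal is correct, but there is little in the paper to compare it against: the paper's entire ``proof'' of this lemma is the citation ``This is \cite[Theorem 2.9]{GRIF1}.'' What you have written is a self-contained proof of that cited result, and it is essentially the standard derivation. You start from the descent criterion the paper quotes just before the lemma (the degree condition, i.e.\ weight-zero homogeneity, together with $\theta(\phi)=0$ read as contraction $\iota_{\theta}\phi=0$ with the Euler field), and your key step is sound: writing $\omega=N/B$ with $N$ a polynomial form homogeneous of weight $b=\deg B$, Cartan's formula $\mathcal{L}_{\theta}=\D\,\iota_{\theta}+\iota_{\theta}\,\D$ together with the Euler identity $\mathcal{L}_{\theta}N=bN$ and $\iota_{\theta}N=0$ gives $N=\iota_{\theta}\bigl(\tfrac{1}{b}\D N\bigr)$, exhibiting $\omega$ as the contraction of an $(n+2-l)$-form whose coefficients have degree $b-(n+2-l)$, exactly as the lemma requires. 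Your sign bookkeeping, which you correctly identify as the only delicate point, also checks out: with coordinates indexed from $0$, the position of $j_{i}$ in $\hat{M}=\hat{J}\cup\{j_{i}\}$ is indeed $j_{i}-i+2$, giving a per-term sign $(-1)^{j_{i}+i+1}$, and the ratio to the target sign $(-1)^{\sum_{k}j_{k}+i}$ is $(-1)^{\sum_{m\in M}m+1}$, which depends only on $M=J\setminus\{j_{i}\}$ and is therefore consistently absorbable into the arbitrary coefficients $A_{M}$ across all $J\supset M$ --- that consistency is precisely what needed verifying. One trivial slip in the $b=0$ edge case: the degree condition forces $\deg A_{J}=-(n+1-l)<0$, so $N=0$ outright rather than having constant coefficients; your alternative argument (injectivity of $\iota_{\theta}$ on constant-coefficient forms) reaches the same conclusion, so nothing is lost. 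Relative to the paper, your write-up buys a complete argument in place of a reference, at the cost of the combinatorial sign analysis that the citation hides.
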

\begin{proof}
\smartqed
This is \cite[Theorem 2.9]{GRIF1}. 
\qed\end{proof}
Let $X$ be a nonsingular hypersurface in $\mathbb{P}^{n}$ given
by the vanishing set $\{Q(x)=0\}$ of a homogeneous polynomial. A
rational $n$-form with a pole along $X$ is then written as 
\[
\omega=\frac{P(x)}{Q(x)}\,\Omega,
\]
where 
\[
\Omega=\sum_{j=0}^{n}(-1)^{i}x_{i}\ \D x_{0}\wedge\ldots\wedge\widehat{\D x_{i}}\wedge\ldots\wedge \D x_{n}
\]
and $\text{deg}\, Q=\text{deg}\, P+(n+1).$ We can assume that $Q(x)=0$
is the minimal defining equation for $X$ so that 
\[
\omega=\frac{P(x)}{Q(x)^{k}}\,\Omega,
\]
where $P$ and $Q$ are relatively prime and $\deg P=k\deg Q-(n+1)$.
In this case we say that $\omega$ has a \emph{pole of order $k\geq1$}
along $X$. Let
\[
\cR:=\left\{ \left.\frac{P\Omega}{Q^{k}}\ \right|\, \deg P=k\deg Q-(n+1)\right\} 
\]
be the set of all rational $n$-forms with a pole along $X$. By \cite{GRIF2},
there is an isomorphism between $\cR$ modulo exact forms and $H^{n}(\P^{n}\setminus X;\C)$.

\begin{definition}
  Let $Q(x) \in \C[x_1 ,\ldots, x_n]$ be
  a polynomial. The \emph{Jacobian ideal} of $Q$ is given by
  $$
  J(Q) = \langle\partial Q/\partial x_{0},\ldots,\partial Q/\partial x_{n}\rangle.
  $$
\end{definition}

\begin{proposition}
\label{prop:ReductionPoleOrder}We can reduce the order of the pole
of $\omega=\frac{P(x)}{Q(x)^{k}}\,\Omega$ from $k\geq2$ to $k-1$
by adding an exact form if and only if $P$ is in the Jacobian ideal $J(Q)$.
\end{proposition}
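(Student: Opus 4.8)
The plan is to carry out the classical Griffiths--Dwork reduction, in which both directions of the equivalence flow from a single explicit computation of the exterior derivative of an $(n-1)$-form with a pole of order $k-1$ along $X$. By Lemma~\ref{thm:RationalFormsPn} (taking $l=2$), every rational $(n-1)$-form on $\mathbb{P}^{n}$ with such a pole has the shape
\[
\eta=\frac{1}{Q^{k-1}}\sum_{i<j}(-1)^{i+j}\bigl(x_{j}A_{i}-x_{i}A_{j}\bigr)\,\D x_{\widehat{ij}},
\]
where $\D x_{\widehat{ij}}$ denotes the form with $\D x_{i}$ and $\D x_{j}$ omitted and each $A_{l}$ is homogeneous of degree $(k-1)\deg Q-n$ (so that $\sum_{l}A_{l}\,\partial Q/\partial x_{l}$ has degree exactly $\deg P$). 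The whole argument amounts to showing that, modulo forms of pole order $k-1$, the class of $\D\eta$ is controlled entirely by the polynomial $\sum_{l}A_{l}\,\partial Q/\partial x_{l}\in J(Q)$, which we then match against the numerator $P$ of $\omega$.

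First I would write $\D\eta=\D\beta/Q^{k-1}-(k-1)\,\D Q\wedge\beta/Q^{k}$, where $\beta$ is the polynomial $(n-1)$-form appearing above; only the second term can raise the pole to order $k$, so everything reduces to computing $\D Q\wedge\beta$. Expanding $\D Q=\sum_{l}(\partial Q/\partial x_{l})\,\D x_{l}$ and using the elementary identities $\D x_{i}\wedge\D x_{\widehat{ij}}=(-1)^{i}\,\D x_{\widehat{j}}$ and $\D x_{j}\wedge\D x_{\widehat{ij}}=(-1)^{j-1}\,\D x_{\widehat{i}}$, I would collect the coefficient of each $\D x_{\widehat{m}}$; after regrouping it becomes $(-1)^{m}\bigl[x_{m}\sum_{l}A_{l}\,\partial Q/\partial x_{l}-A_{m}\sum_{l}x_{l}\,\partial Q/\partial x_{l}\bigr]$. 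At this point Euler's identity $\sum_{l}x_{l}\,\partial Q/\partial x_{l}=(\deg Q)\,Q$ collapses the second sum, and, recalling $\Omega=\sum_{m}(-1)^{m}x_{m}\,\D x_{\widehat{m}}$, yields
\[
\D Q\wedge\beta=\Bigl(\sum_{l}A_{l}\,\tfrac{\partial Q}{\partial x_{l}}\Bigr)\Omega-(\deg Q)\,Q\sum_{m}(-1)^{m}A_{m}\,\D x_{\widehat{m}}.
\]
Dividing by $Q^{k}$ gives the key formula $\D\eta=-(k-1)\bigl(\sum_{l}A_{l}\,\partial Q/\partial x_{l}\bigr)Q^{-k}\,\Omega+(\text{pole order }k-1)$, the lower-order remainder absorbing both $\D\beta/Q^{k-1}$ and the $Q\,A_{m}$ term.

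Both directions are then immediate. For the ``if'' direction, suppose $P\in J(Q)$, so $P=\sum_{l}A_{l}\,\partial Q/\partial x_{l}$ with the $A_{l}$ of the correct degree; building $\eta$ from the coefficients $-A_{l}/(k-1)$ gives $\D\eta=\tfrac{P}{Q^{k}}\Omega+(\text{pole order }k-1)$, so $\omega-\D\eta$ has pole order $k-1$. For the ``only if'' direction, suppose some exact form $\D\eta$ makes $\omega+\D\eta$ have pole order at most $k-1$; we may take $\eta$ of pole order exactly $k-1$. Matching the $Q^{-k}$ numerators forces $P-(k-1)\sum_{l}A_{l}\,\partial Q/\partial x_{l}$ to be divisible by $Q$, say equal to $Q\,T$. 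Since $\sum_{l}A_{l}\,\partial Q/\partial x_{l}\in J(Q)$ and, again by Euler's identity, $Q=\tfrac{1}{\deg Q}\sum_{l}x_{l}\,\partial Q/\partial x_{l}\in J(Q)$, we conclude $P=(k-1)\sum_{l}A_{l}\,\partial Q/\partial x_{l}+Q\,T\in J(Q)$.

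The main obstacle is the sign bookkeeping in $\D Q\wedge\beta$: one must carry the wedge reorderings carefully and verify that, after Euler's identity, every term except $\bigl(\sum_{l}A_{l}\,\partial Q/\partial x_{l}\bigr)\Omega$ either drops the pole order or cancels. A secondary point, arising in the ``only if'' direction, is justifying the reduction to an $\eta$ of pole order exactly $k-1$: a correcting form of lower order cannot touch the $Q^{-k}$ term, while one of higher order can only help if its leading coefficient vanishes modulo $Q$, which lets one descend by induction on the pole order.
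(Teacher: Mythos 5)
Your core computation reproduces the paper's own proof: the key identity you derive for $d\eta$ is precisely the paper's equation (\ref{eq:ReductionFormula-1}), obtained from the same Lemma~\ref{thm:RationalFormsPn} normal form, and your ``if'' direction is the paper's argument verbatim (the paper even identifies your unnamed pole-order-$(k-1)$ remainder explicitly as $-\bigl(\sum_{j}\partial A_{j}/\partial x_{j}\bigr)\Omega/Q^{k-1}$, since the divergence term from $d\beta$ and the $Q\,A_{m}$ term combine into a multiple of $\Omega$). Your use of Euler's identity to get $Q\in J(Q)$, and hence the ``only if'' conclusion for a correcting form of pole order exactly $k-1$, is also sound and is more than the paper writes down.

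The genuine gap is in your closing remark justifying the reduction to a correcting form of pole order exactly $k-1$. If $\eta$ has a piece $\beta_{m}/Q^{m}$ with $m\geq k$, matching at order $m+1$ does force $Q\mid S_{m}$ where $S_{m}=\sum_{l}A^{(m)}_{l}\,\partial Q/\partial x_{l}$, but this does not let you ``descend by induction'' as stated: the piece still contributes $-\bigl(\sum_{l}\partial A^{(m)}_{l}/\partial x_{l}\bigr)\Omega/Q^{m}$, plus $T_{m}\Omega/Q^{m}$ after writing $S_{m}=Q\,T_{m}$, and these terms propagate all the way down to the $Q^{-k}$ level, where nothing in your argument places them in $J(Q)$ --- for an arbitrary coefficient vector $A$, the divergence $\sum_{l}\partial A_{l}/\partial x_{l}$ need not lie in $J(Q)$. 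The missing ingredients, and the place where nonsingularity of $Q$ genuinely enters, are: (a) since $x_{i}A_{j}-x_{j}A_{i}$ is unchanged under $A_{l}\mapsto A_{l}-Tx_{l}/\deg Q$, one may normalize $S_{m}=0$; then, because $\partial Q/\partial x_{0},\ldots,\partial Q/\partial x_{n}$ is a \emph{regular sequence}, the resulting syzygy is Koszul, $A^{(m)}_{l}=\sum_{j}B_{lj}\,\partial Q/\partial x_{j}$ with $B_{lj}=-B_{jl}$, whence $\sum_{l}\partial A^{(m)}_{l}/\partial x_{l}=\sum_{l,j}\bigl(\partial B_{lj}/\partial x_{l}\bigr)\partial Q/\partial x_{j}\in J(Q)$, the second-derivative terms cancelling by antisymmetry; and (b) a one-degree-lower analogue of the reduction identity showing that such a Koszul piece satisfies $\beta_{m}/Q^{m}=d\mu+\rho$ with $\mu$ a rational $(n-2)$-form and $\rho$ of pole order at most $m-1$, so that replacing $\eta$ by $\eta-d\mu$ leaves $d\eta$ unchanged while lowering the top pole order, which is what actually makes the induction close. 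Without (a) and (b) the descent does not go through; note the paper sidesteps the issue entirely by exhibiting the computation only for a correcting form already of pole order $k-1$, so your instinct to address the general case is right, but the mechanism is absent as written.
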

\begin{proof}
\smartqed
A rational $(n-1)$-form $\varphi$ with a pole of order $k-1$ along
$X$ can by Lemma~\ref{thm:RationalFormsPn} be written as 
\[
\varphi=\frac{1}{Q(x)^{k-1}}\sum_{i<j}(-1)^{i+j}\left(x_{i}A_{j}(x)-x_{j}A_{i}(x)\right)\D x_{\widehat{i,j}}.
\]
A brief calculation shows that 
\begin{equation}
\D \varphi=\left[\frac{(k-1)\sum_{j=0}^{n}\left(A_{j}(x)\frac{\partial Q(x)}{\partial x_{j}}\right)}{Q(x)^{k}}-\frac{\sum_{j=0}^{n}\frac{\partial A_{j}(x)}{\partial x_{j}}}{Q(x)^{k-1}}\right]\Omega,\label{eq:ReductionFormula-1}
\end{equation}
which after rearranging the terms is equivalent to 
\[
\frac{R(x)}{Q(x)^{k-1}}\Omega=\omega+\D \varphi
\]
for some polynomial $R(x)$, proving the result.
\qed 
\end{proof}
Let 

\[
\cJ:=\left\{ \frac{P\Omega}{Q^{k}}\mid P\in J(Q)\right\} \subset\cR,
\]
be the forms whose pole order can be reduced by an exact form. Then
there is a natural filtration of $H^{n}(\P^{n}\setminus X;\C)$ by
pole order, 
\[
B_{1}\to B_{2}\to\ldots\to B_{i}\to B_{i+1}\to\ldots
\]
where 
\[
B_{i}=\left(\frac{\cR}{\cJ}\right)_{i}:=\left\{ \left[\frac{P\Omega}{Q^{i}}\right]\mid\deg(P)=i\deg(Q)-(n+1)\right\} .
\]
Furthermore, by a theorem due to Macaulay \cite[Theorem 4.11]{GRIF1},
for any $P\Omega/Q^{m}$ such that $Q$ is nonsingular and $\deg\, P\geq(n+1)(\deg\, Q-2)$,
we have that $P\in J(Q)$. This means that the filtration stabilizes,
since this inequality is satisfied for $m>n$. In other words, we
can find a vector space basis $\mathcal{B}_{1}\cup\ldots\cup\mathcal{B}_{n}$
of $H^{n}(\mathbb{P}^{n}\setminus X;\C)$, where $\mathcal{B}_{i}$
is a basis of $B_{i}$ consisting of forms with a pole of order $i$.
\begin{remark}
\label{rem:griffithsTransversality}
If $Q$ and some coefficient of $P$ depend on a parameter $\psi$,
and we denote $\D f/\D\psi=f'$ for any polynomial $f$, then 
\[
\frac{\D}{\D\psi}\left(\frac{P\Omega}{Q^{l}}\right)=\frac{(QP'-lPQ')\Omega}{Q^{l+1}}.
\]
In other words, pole order increases by one when differentiating with
respect to $\psi$. It is shown in \cite{GRIF2} that $B_{l}$ can be
identified with the Hodge filtration $F^{n-l}PH^{n-1}(X)$, in which
case the above equation is a manifestation of Griffiths transversality. For more
details, see \cite[Section 10.2.2]{VOIS1}.
\end{remark}
\begin{definition}
Fix an $(n-1)$-cycle $\gamma$ on $X$. The generalized \emph{residue
map} 
\[
\text{Res}\colon\ H^{n}(\mathbb{P}^{n}\setminus X;\C)\to PH^{n-1}(X)
\]
is determined by the relation 
\begin{equation}
\frac{1}{2\pi \i}\int_{\tau(\gamma)}\frac{P}{Q^{l}}\Omega=\int_{\tau}\text{Res}\left(\frac{P}{Q^{l}}\Omega\right),\label{eq:ResidueAsIntegral}
\end{equation}
where $\tau(\gamma)$ is a tube around $\gamma$, and $PH^{n-1}(X)$
is the primitive cohomology of\textbf{ $X$}. If $H$ represents a
hyperplane class, primitive cohomology is defined as 
\[
PH^{n-1}(X)=\{\eta\in H^{n-1}(X;\C)\mid\eta\cdot H=0\}.
\]
\end{definition}
The residue map is surjective in general, and in the case that $n-1$
is odd, primitive cohomology captures all of the cohomology of $X$ 
(for a proof, see \cite{GRIF1}). Therefore, if we are working with an odd-dimensional
one-parameter family of hypersurfaces $X_\psi \colon \{ Q = 0 \}$,
as is the case in equation (\ref{eq:quintic-def}), then $H^{n-1}(X_\psi;\,\C)$
has a basis of residues. Moreover, we have that
\[
\frac{\D ^{k}}{\D \psi^{k}}\int_{\gamma}\text{Res}\left(\frac{P\Omega}{Q^{l}}\right)=\frac{\D ^{k}}{\D \psi^{k}}\left(\frac{1}{2\pi \i}\int_{\tau(\gamma)}\frac{P\Omega}{Q^{l}}\right)=\frac{1}{2\pi \i}\int_{\tau(\gamma)}\frac{\D ^{k}}{\D \psi^{k}}\left(\frac{P\Omega}{Q^{l}}\right).
\]
So, in order to find relations amongst $\frac{\D ^{k}}{\D \psi^{k}}\text{Res}\left(\frac{P\Omega}{Q^{l}}\right)$, we can work with meromorphic forms on $\P^{n}$.

\begin{remark}
If $n=1$, the residue map is the familiar contour integral. For instance,
for a one form $\frac{p(z)}{q(z)}\D z$ with $p,q\in\mathbb{C}[z]$
we have 
\[
\int_{\Gamma}\frac{p(z)}{q(z)}\D z=\frac{1}{2\pi \i}\left[\sum_{P_{j}}\text{Res}_{P_{j}}\left(\frac{p(z)}{q(z)}\right)\right],
\]
where $\Gamma$ encircles all the poles $P_{j}$ of $\frac{p(z)}{q(z)}.$ 
\end{remark}

\subsection{Determining Picard-Fuchs Equations\label{sec:Determining-Picard-Fuchs-Eq}}

By this point we have established sufficient background material to
determine Picard-Fuchs equations for one-parameter families of hypersurfaces
in several ways.

\subsubsection{The Griffiths-Dwork Method}

Let $X_{\psi}\subset\P^{n}$ be an element of a one-parameter family
of hypersurfaces parameterized by $\psi$. Suppose that $X_{\psi}$
is given by $\{Q=0\}$ and choose a form $P\Omega/Q$ whose residue
is a holomorphic $(n-1)$-form $\omega(\psi)$ on $X_{\psi}$. Finding
the Picard-Fuchs equation satisfied by the period $\int_{\gamma}\omega(\psi)$
amounts to finding a relation between $\omega(\psi)$ and its derivatives.
The description of forms on $X_{\psi}$ as residues of meromorphic
forms on $\P^{n}$ gives rise to the following algorithm for finding
Picard-Fuchs equations called the \emph{Griffiths-Dwork method}, also described in \cite{COKA,
GAHRS}.
\begin{enumerate}
\item Find a basis $B$ of meromorphic differentials for $H^{n}(\P^{n}\setminus X_{\psi};\C)$.
This amounts to finding a basis for the ring $\C(\psi)[x_{1},\ldots,x_{n}]/J(Q)$,
where $\C(\psi)$ emphasizes that coefficients are rational functions
in $\psi$.
\item Starting with a form $P\Omega/Q$ as above, calculate $|B|$ of its
derivatives with respect to $\psi$ and express them in terms of forms
in the basis and forms with numerators in $J(Q)$. Pole order increases
with differentiation due to Remark~\ref{rem:griffithsTransversality}, so use Proposition
\ref{prop:ReductionPoleOrder} to reduce the pole order.
\item The $|B|+1$ forms obtained from $\omega$ and its derivatives must
have a relation between them. This is the Picard-Fuchs equation satisfied
by $\omega(\psi)$. \end{enumerate}
\begin{example}
\label{exa:PFE-for-mirror-of-quintic}We follow \cite{COKA} to illustrate
the Griffiths-Dwork method on the mirror $\cW$ of the one-parameter
family $\cM$ of quintic threefolds whose elements are given by

\[
X_{\psi}\colon\left\{ Q:=\sum_{i=1}^5 x_{i}^{5}-5\psi x_{1}x_{2}x_{3}x_{4}x_{5}=0\right\} .
\]
Let us roughly describe the mirror construction. Let $\eta_{i}$ be
a fifth root of unity, and define $\mathcal{G}$ be the group of diagonal
automorphisms 
\[
g\colon(x_{1},x_{2},x_{3},x_{4},x_{5})\mapsto(\eta_{1}x_{1},\eta_{2}x_{2},\eta_{3}x_{3},\eta_{4}x_{4},\eta_{5}x_{5})
\]
which preserve the holomorphic $3$-form on $X_{\psi}$, modulo those
that come from the scaling action of projective space. The mirror
family $\cW$ is then given by the resolution of singularities of
the quotient $\cM/\mathcal{G}$. For a generic pair $M\in\cM$
and $W\in\cW$, Hodge numbers are exchanged according to $h^{p,q}(M)=h^{3-p,q}(W)$
by mirror symmetry. So, since $h^{i,i}(W)=1$, we have that $b_{3}(W)=4$.
Moreover, cohomology of $\cW$ contains the $\mathcal{G}$-invariant
cohomology of $\cM$.

We therefore choose residues of four meromorphic $4$-forms $\omega_{1},\ldots,\omega_{4}$
that are invariant under $\mathcal{G}$. This will give a basis for
the cohomology of the mirror family. Specifically, for any $l\geq1$
we define $P_{l}=(-1)^{l-1}(l-1)!\psi^{l}(\prod_{i=1}^{5}x_{i})^{l-1}$
and $\omega_{l}=P_{l}\Omega/Q^{l}$. Our goal is to find the Picard-Fuchs
equation of $\Res(\omega_{1})$, i.e., the relation between derivatives
of $\omega_{1}$ with respect to $\psi$. It is convenient to define
$w=\psi^{-5}$ and differentiate using the operator $\vartheta_{w}:=w\frac{\D }{\D w}=-\frac{1}{5}\psi\frac{\D }{\D \psi}$.
We have that 
\[
\vartheta_{w}\omega_{l}=-\frac{l}{5}\omega_{l}+\omega_{l+1},
\]
which after repeated application to $\omega_{1}$ yields
\begin{equation}\label{eq:delta_omega_1}
\mat{
\omega_{1}\\
\vartheta_{w}\omega_{1}\\
\vartheta_{w}^{2}\omega_{1}\\
\vartheta_{w}^{3}\omega_{1}
}
=
\mat{
1 & 0 & 0 & 0\\
-\frac{1}{5} & 1 & 0 & 0\\
\frac{1}{25} & -\frac{3}{5} & 1 & 0\\
-\frac{1}{125} & \frac{7}{25} & -\frac{6}{5} & 1
}
\mat{
\omega_{1}\\
\omega_{2}\\
\omega_{3}\\
\omega_{4}
}.
\end{equation}
We need to differentiate one more time in order to get a non-trivial
relation. Since forms can be written in terms of the basis, and $\vartheta_{w}^{4}\omega_{1}$
has a pole of order 5 by Remark~\ref{rem:griffithsTransversality}, it follows that
\[
\vartheta_{w}^{4}\omega_{1}=c_{1}(\psi)\omega_{1}+\ldots+c_{4}(\psi)\omega_{4}+\frac{\left(\sum_{i}A_{i}B_{i}\right)\Omega}{Q^{5}}
\]
for some $c_{j}(\psi)\in\C(\psi)$, $A_{i}(x)\in\C[x_{1},\ldots,x_{5}]$,
where $\{B_{i}\}$ constitutes a Gr\"obner basis for $J(Q)$. We can
reduce the pole order of the last term using Proposition \ref{prop:ReductionPoleOrder},
and again express the lower order form in terms of the basis $\{\omega_{i}\}$.
These calculations can be done using a computer (see \cite{GAHRS} for
details and source code), and the result is the Picard-Fuchs equation
(\ref{eq:quintic-PFE}). 
\end{example}

\begin{example}
The quintic threefold family (\ref{eq:quintic-def}) can be seen as
a deformation of the Fermat quintic $x_{1}^{5}+\ldots+x_{5}^{5}$. This polynomial belongs
to a larger class of \emph{invertible polynomials}. A quasi-homogeneous
polynomial 
\[
G(x)=\sum_{i=1}^{n}c_{i}\prod_{j=1}^{n}x_{i}^{a_{ij}}
\]
with (reduced) weights $(q_{1},\ldots,q_{n})$ is invertible if the
exponent matrix $(a_{ij})$ is invertible and the ring $\C[x]/J(G)$ has
a finite basis. In general, the zero set of an invertible polynomial
defines a variety in a weighted projective space
$\P(q_{1},\ldots,q_{n})$, which can be realized as the quotient of the
usual projective space $\P^{n-1}$ by an abelian group action (for
more details about varieties in weighted projective space, see
\cite{DOLG}). We can obtain a one-parameter family from 
polynomials such as $G(x)$ via
\[
F(x)=G(x)+\psi\prod_{i=1}^{n}x_{i}.
\]
Elements of this family define Calabi-Yau hypersurfaces
if $\sum q_{i}=\deg G$ by \cite{DOLG}, so such deformations provide a
large class examples of one-parameter Calabi-Yau families. A
combinatorial method for calculating the Picard-Fuchs equations of
such families based on the Griffiths-Dwork method is presented in
\cite{GAHRS}. For instance, the family of K3 surfaces
\[
\left\{ x_{1}^{8}+x_{2}^{4}+x_{1}x_{3}^{3}+x_{4}^{3}+\psi\prod x_{i=1}^{3}x_{i}^{4}=0\right\} 
\]
 in $\P(3,6,7,8)$ has Picard-Fuchs equation $\cL f=0$, where for
$\vartheta_{\psi}=\psi\frac{\partial}{\partial\psi}$ we have
\begin{align*}
\cL:&=\psi^{12}\vartheta_{\psi}^{3}(\vartheta_{\psi}+3)(\vartheta_{\psi}+6)(\vartheta_{\psi}+9)\\
&\qquad -2^{8}3^{9}(\vartheta_{\psi}-1)(\vartheta_{\psi}-2)(\vartheta_{\psi}-5)(\vartheta_{\psi}-7)(\vartheta_{\psi}-10)(\vartheta_{\psi}-11).
\end{align*}
\end{example}

\subsection{Finding the Periods\label{sec:Finding-the-Periods}}

In this section we describe how to find series solutions to the Picard-Fuchs
equation (\ref{eq:quintic-PFE}). The solutions correspond to periods
of the mirror, and thus by solving the equation we obtain a series
description of the periods.

\subsubsection{Hypergeometric Series}

Let $Q$ be the defining polynomial of the quintic threefold $X_{\psi}$
in equation (\ref{eq:quintic-def}). It turns out that it is possible
to directly calculate the period on $X_{\psi}$ with respect to 
\begin{equation}
5\psi\frac{x_{5}\D x_{1}\D x_{2}\D x_{3}}{\frac{\partial Q}{\partial x_{4}}}.\label{eq:fundamental-form}
\end{equation}

\begin{example}
\label{exa:ell-curve-direct-period-calc}We will show here the analogous
calculation on the Fermat family of elliptic curves $Z_{\psi}\colon\{F_{\psi}=0\}$
defined in Example \ref{expl:FermatResidueCalculation} which can
be applied, mutatis mutandis, to $X_{\psi}$. The latter appears in
\cite{CANMS}. Denote by $\gamma_{i}$ the cycle on $\{Q=0\}$ determined
by $|x_{i}|=\delta$ for some small $\delta$, and consider 
\[
\pi_{0}(\psi)=3\psi\frac{1}{2\pi \i}\int_{\gamma_{1}}\frac{x_{3}\D x_{1}}{\frac{\partial F_{\psi}}{\partial x_{2}}}.
\]
Since $1=\frac{1}{2\pi \i}\int_{\gamma_{3}}\frac{\D x_{3}}{x_{3}}$ and
\[
\text{Res}\left(\frac{\D x_{2}}{f(x)}\right)=\frac{1}{2\pi \i}\int_{\gamma_{2}}\frac{\D x_{2}}{F_{\psi}}=\frac{-1}{\frac{\partial F_{\psi}}{\partial x_{2}}},
\]
we have 
\begin{align*}
\pi_{0}(\psi)= & -3\psi\frac{1}{(2\pi \i)^{3}}\int_{\gamma_{1}\times\gamma_{2}\times\gamma_{3}}\frac{\D x_{1}\D x_{2}\D x_{3}}{F_{\psi}(x)}\\
= & -3\psi\frac{1}{(2\pi \i)^{3}}\int_{\gamma_{1}\times\gamma_{2}\times\gamma_{3}}\frac{\D x_{1}\D x_{2}\D x_{3}}{x_{1}^{3}+x_{2}^{3}+x_{3}^{3}-3\psi x_{1}x_{2}x_{3}}\\
= & 3\psi\frac{1}{(2\pi \i)^{3}}\int_{\gamma_{1}\times\gamma_{2}\times\gamma_{3}}\frac{\D x_{1}\D x_{2}\D x_{3}}{3\psi\ x_{1}x_{2}x_{3}}\frac{1}{1-\frac{x_{1}^{3}+x_{2}^{3}+x_{3}^{3}}{3\psi x_{1}x_{2}x_{3}}}\\
= & \frac{1}{2\pi \i}\sum_{n=0}^{\infty}\int_{\gamma_{1}\times\gamma_{2}\times\gamma_{3}}\frac{\D x_{1}\D x_{2}\D x_{3}}{x_{1}x_{2}x_{3}}\frac{1}{(3\psi)^{n}}\frac{(x_{1}^{3}+x_{2}^{3}+x_{3}^{3})^{n}}{(x_{1}x_{2}x_{3})^{n}},
\end{align*}
with the expansion performed for large enough $\psi$. We now wish
to evaluate the integral using residues. The integral for each $n$
is a rational function in the variables $x_{i}$ and therefore vanishes
for all powers of $x_{i}$ except $-1$. This happens when the term
$(x_{1}x_{2}x_{3})^{n}$ occurs in the expansion of $(x_{1}^{3}+x_{2}^{3}+x_{3}^{3})^{n}$.
To see when this is the case, consider 
\[
(x_{1}^{3}+x_{2}^{3}+x_{3}^{3})^{n}=\sum_{k_{1}+k_{2}+k_{3}=n}{n \choose k_{1},k_{2},k_{3}}x_{1}^{3k_{1}}x_{2}^{3k_{2}}x_{3}^{3k_{3}},
\]
and note that we want $k_{1}=k_{2}=k_{3}=k$ so that $n=3k$. The
coefficient we need is then ${3k \choose k,k,k}=\frac{(3k)!}{(k!)^{3}}$
and the expression for $\pi_{0}(\psi)$ becomes 
\begin{equation}
\pi_{0}(\psi)=\sum_{k=0}^{\infty}\frac{(3k)!}{(k!)^{3}}\frac{1}{(3\psi)^{3k}}\label{eq:CandelasHypergeometricFermatExpansion}
\end{equation}
for $\psi$ large enough.
\end{example}
Similarly, the integral $\varpi_0(\psi)$ of (\ref{eq:fundamental-form}) on $X_{\psi}$
is given by equation (\ref{eq:fundamental-period-quintic}), i.e., 
\[
\varpi_{0}(\psi)=\sum_{m=0}^{\infty}\frac{(5m)!}{(m!)^{5}}\lambda^{m},
\]
where $\lambda=\frac{1}{(5\psi)^{5}}$. This gives one period of $X_{\psi}$
and, as we will see in a moment, a solution of $\cL f=0$ for $\cL$ defined in (\ref{exa:PFE-for-mirror-of-quintic}).
What about the other solutions? Recall that 
\begin{equation}
f(z)=\sum_{k}C(k)z^{k}\label{eq:generic-hypergeometric-series}
\end{equation}
is a \emph{(generalized) hypergeometric series} if the ratio of consecutive
terms is a rational function of $k$, 
\begin{equation}
\frac{C(k+1)}{C(k)}=c\frac{(k+a_{1})\ldots(k+a_{p})}{(k+b_{1})\ldots(k+b_{q})(k+1)},\label{eq:generic-hypergeometric-ratio}
\end{equation}
where $c$ is a constant \cite{DWO}. In the case of the Fermat family
of elliptic curves, we indeed have
\begin{equation}
\frac{(3(k+1))!/((k+1)!)^{3}}{(3k)!/(k!)^{3}}=\frac{3(k+\frac{2}{3})(k+\frac{1}{3})}{(k+1)(k+1)}.\label{eq:ratio-for-fermat-elliptic}
\end{equation}
The standard notation for a hypergeometric \emph{function} given by
(\ref{eq:generic-hypergeometric-series}) is 
\[
f(z)={}_{p}F_{q}(a_{1},\ldots,a_{p};\ b_{1},\ldots,b_{q};\ z),
\]
in which case $f(z)$ satisfies the \emph{hypergeometric differential
equation} 
\begin{equation}
\left[\vartheta_{z}\prod_{i=1}^{q}(\vartheta_{z}+b_{i}-1)-z\prod_{i=1}^{p}(\vartheta_{z}+a_{i})\right]f(z)=0,\label{eq:generic-hypergeometric-diffyq}
\end{equation}
where $\vartheta_{z}:=z\frac{\D}{\D z}.$ The Picard-Fuchs equation (\ref{eq:quintic-PFE})
is hypergeometric, where one solution is given by $\varpi_{0}=\,_{4}F_{3}(\frac{1}{5},\frac{2}{5},\frac{3}{5},\frac{4}{5};\,1,1,1;\,\psi^{-5})$.
We will next explain how to find the remaining solutions.

\subsubsection{Frobenius Method}

Hypergeometric differential equations can be solved using the Frobenius
method. We will illustrate the basic technique for the Picard-Fuchs
equation (\ref{eq:quintic-PFE}). We already have the series description
(\ref{eq:fundamental-period-quintic}) of one solution around $\lambda=0$

\[
\varpi_{0}=\sum_{m=0}^{\infty}\frac{\Gamma(5k+1)}{\Gamma(k+1)^{5}}\lambda^{m},
\]
where $\lambda=\frac{1}{(5\psi)^{5}}$. So our goal is to obtain the
remaining three solutions of the differential equation $\cL f(z)=0$,
where $\cL$ is defined in equation (\ref{eq:quintic-PFE}). Before
we do so, we remark that there is a more systematic way of finding
the first solution of a hypergeometric differential equation than
the direct calculation of the integral $\varpi_{0}$. Since it is
not critical to what follows, we illustrate with a quick example.
\begin{example}
The differential equation satisfied by the period $\pi_{0}$ of Example
\ref{expl:FermatResidueCalculation} is by equation (\ref{eq:ratio-for-fermat-elliptic})
given by

\begin{equation}
\cL f(z)\equiv\left[\vartheta_{z}^{2}-z(\vartheta_{z}+\frac{1}{3})(\vartheta_{z}+\frac{2}{3})\right]f(z)=0\label{eq:fermat-elliptic-curve-diffq}
\end{equation}
in terms of $z=1/(3\psi)^{3}$. We now make the ansatz 
\begin{equation}
f(z)=z^{c}+\sum_{k=1}^{\infty}a_{k}z^{k+c}\label{eq:fermat-elliptic-ansatz}
\end{equation}
for the solution around the regular singular point $z=0$. Applying
the differential equation and setting coefficients to zero, we obtain
\[
c^{2}=0\quad\text{and}\quad(k+c)^{2}a_{k}-\left((k-1+c)^{2}+(k-1+c)+\frac{2}{9}\right)a_{k-1}=0
\]
from the $z^{c}$ term and the $z^{k+c}$ terms for $k\geq1$, respectively.
The first equation is called the \emph{indicial equation}, and implies
that $c=0$. Using this in the second equation gives for $k\geq1$
\[
a_{k}=\frac{(\frac{1}{3}+k-1)(\frac{2}{3}+k-1)}{k^{2}}a_{k-1}.
\]
We can then iterate to obtain the solution 
\[
f_{1}(z)=\sum_{k=0}^{\infty}\frac{(\frac{1}{3})_{k}(\frac{2}{3})_{k}}{(1)_{k}^{2}}z^{k},
\]
where $(a)_{k}=\frac{\Gamma(a+k)}{\Gamma(a)}=a(a+1)\ldots(a+k-1)$
is the Pochhammer symbol. It is easily checked that this solution
is equivalent to (\ref{eq:CandelasHypergeometricFermatExpansion}).
There is, of course, a second solution of (\ref{eq:fermat-elliptic-curve-diffq}).
The indicial equation is of degree two (which is implied by the fact
that $z=c$ is a \emph{regular }singular point), but has a repeated
root and cannot be used again as above. The idea is to show that $\left. \frac{\partial f}{\partial c}\right|_{c=0}$
is a solution, the analog of which we will tackle for the quintic
directly.
\end{example}
We now return to the case of the quintic threefold family (\ref{eq:quintic-def}).
Define 
\[
\varpi(\lambda,s)=\sum_{k=1}^{\infty}\frac{\Gamma(5(k+s)+1)}{\Gamma(k+s+1)}\lambda^{k+s}
\]
and note that $\varpi_{0}(\lambda)=\varpi(\lambda,0)$. A direct calculation
shows that $\cL\varpi(\lambda,s)=s^{4}\lambda^{s}+\cO(s^{5})$, from
which it follows that for $0\leq i\leq3$ we have
\[
\left. \frac{\partial^{i}}{\partial s^{i}}\cL\varpi(\lambda,s)\right|_{s=0}=\cL\left. \frac{\partial^{i}}{\partial s^{i}}\varpi(\lambda,s)\right|_{s=0}=0.
\]
This gives us a set of solutions, 

\[
\varpi_{i}(\lambda)=\left. \frac{\partial^{i}}{\partial s^{i}}\varpi(\lambda,s)\right|_{s=0}\,.
\]
To describe them more explicitly and see that they are linearly independent, let
\begin{equation}
  a_{k}(s):=\frac{\Gamma(5(k+s)+1)}{\Gamma(k+s+1)}
  \quad\text{and}\quad
  g_{i}(z)=\sum_{k=0}^{\infty} \left. \frac{\partial^{i}a_{k}(s)}{\partial s^{i}}\right|_{s=0}\,\lambda^{k}.\label{eq:quintic-log-term-solutions}
\end{equation}
We calculate
\begin{align*}
\varpi_{1}(\lambda) & =\left(\sum_{k=0}^{\infty}a_{k}\lambda^{k}\right)\log\lambda+\sum_{k=0}^{\infty}\left. \frac{\partial a_{k}(s)}{\partial s}\right|_{s=0}\,\lambda^{k}\\
 & =\varpi_{0}(\lambda)\log\lambda+g_{1}(\lambda).
\end{align*}
Iterating, we obtain a full set of solutions. Namely, for $0\leq i\leq3$
the solutions are
\begin{equation}
\varpi_{i}(\lambda)=\sum_{j=0}^{i}{i \choose j}g_{j}(\lambda)(\log\lambda)^{i-j}.\label{eq:general-quintic-solutions}
\end{equation}
In the case of the family given in (\ref{eq:quintic-PFE}), these solutions correspond to the periods by \cite{CANMS}.

\section{Point Counting}

In this chapter we will show how to obtain the expression (\ref{eq:quintic-period-nr-points})
for the number of points on a quintic threefold (\ref{eq:quintic-def})
defined over a finite field $k=\F_{p}$ of characteristic $p$. The
basic idea is to use $p$-adic character formulas to mimic the behavior
of period integrals via $p$-adic analysis techniques. We will also
explain how to calculate the zeta functions of several
varieties over finite fields, and discuss the relationship of zeta
functions and mirror symmetry.

\subsection{Character Formulas\label{sec:Character-Formulas}}

Let us first establish some basics about characters of finite groups.
Let $K=\C$ or $\C_{p}$, let $G$ be a nontrivial finite abelian
group, and take a non-trivial character $\chi \colon G\to K$.
We then have that
\[
\sum_{x\in G}\chi(x)=0
\]
since for $y\in G$ such that $\chi(y)\neq1$ we have $\chi(y)\sum_{x\in G}\chi(x)=\sum_{x\in G}\chi(x)$,
and so $(\chi(y)-1)\sum_{x\in G}\chi(x)=0.$ It is also easy to see
that 
\begin{equation}
\sum_{\chi\in\widehat{G}}\chi(x)=\begin{cases}
0, & \text{if }x\neq1;\\
|G|, & \text{if }x=1,
\end{cases}\label{eq:summation-multiplicative-char-over-grp}
\end{equation}
and 
\[
\chi(x^{-1})=\chi(x)^{-1}=\overline{\chi(x)},
\]
where if the character $\chi$ maps into $\C_p$, we define
$\overline{\chi(x)} = \chi^{-1}(x)$. Our goal is to count points on
hypersurfaces defined over $k=\F_{q}$, where $q=p^{n}$ with
coordinates in some $k_{r}=$degree $r$ extension of $k$. 
Denote the trivial character by $\varepsilon$. If
$\chi\colon\knus\to K$ is a multiplicative character we can define
\[
\chi(0)=\begin{cases}
0, & \chi\neq\varepsilon;\\
1, & \chi=\epsilon,
\end{cases}
\]
so that we can consider it as a homomorphism 
\[
\chi\colon\knu\to K.
\]
Fixing a non-trivial \emph{additive} character $\psi\colon\knu\to(K,\times)$,
we define the \emph{Gauss sum} 
\[
g(\chi)=\sum_{x\in k_{r}}\chi(x)\psi(x)
\]
which for non-trivial $\chi$ equals $g_{0}(\chi):=\sum_{x\in k_{r}^{*}}\chi(x)\psi(x)$
and otherwise 
\[
g(\varepsilon)=\sum_{x\in k_{r}}\psi(x)=0\quad\text{and}\quad g_{0}(\varepsilon)=-1,
\]
since $\psi$ is non-trivial. 

Gauss sums $g_{0}$ are proportional to Fourier transforms: consider
$\psi\colon\knus\to K$ as a $K$-valued function on $\knus$, and let
the Fourier transform of $f$ to be the $K$-valued function on the
group $\widehat{\knus}$ of multiplicative characters
$\chi \colon \knus \to K$ given by
\[
\widehat{f}(\conj{\chi})=\frac{1}{q^{r}-1}\sum_{x\in\knus}\psi(x)\chi(x).
\]
We also get Fourier inversion, i.e., we can express $f(x)$ in terms of
characters. Consider the sum over all multiplicative characters $\chi\colon\knu^{*}\to K$
for any $x\neq0$,
\begin{align*}
\sum_{\chi}g_{0}(\conj{\chi})\chi(x) & =\sum_{\chi}\left(\sum_{y\in\knus}\conj{\chi}(y)\psi(y)\right)\chi(x)\\
 & =\sum_{\chi}\sum_{y\in\knu^{*}}\chi(y^{-1}x)\psi(y)\\
 & =\sum_{y\in\knu^{*}}\psi(y)\sum_{\chi}\chi(y^{-1}x)\\
 & =(q^{r}-1)\psi(x).
\end{align*}
Therefore, for all $x\neq0$ we have 
\begin{equation}
\psi(x)=\frac{1}{q^{r}-1}\sum_{\chi}g_{0}(\conj{\chi})\chi(x)=\frac{1}{q^{r}-1}\sum_{\chi}g_{0}(\chi)\conj{\chi}(x).\label{eq:char-fourier-inversion}
\end{equation}
This is the Fourier inversion formula for $f=\psi$ (up to an unconventional
choice for what we are conjugating):
\[
f(x)=\sum_{\chi\in\widehat{G}}\widehat{f}(\conj{\chi})\conj{\chi}(x).
\]

\begin{remark}
If $x=0$, then $\psi(0)=1$ and since $\conj{\chi}(0)=0$ unless
$\conj{\chi}=\varepsilon$ we have the right hand side equal to $\frac{1}{q^{r}-1}g_{0}(\varepsilon)=\frac{1}{q^{r}-1}(-1)\neq1$.
So the formula does not hold for $x=0$. This will prove to be a minor
annoyance when counting points.
\end{remark}

\begin{remark}
\label{rem:inverting-gauss-sum-wo-zero}Since $g(\varepsilon)\conj{\varepsilon}(x)=g_{0}(\varepsilon)\conj{\varepsilon}(x)+\varepsilon(0)\psi(0)\conj{\varepsilon}(x)=g_{0}(\varepsilon)\conj{\varepsilon}(x)+1$
and for $\chi\neq\varepsilon$ $g(\chi)=g_{0}(\chi)$ we have
\[
\psi(x)=\frac{1}{q^{r}-1}\left(\sum_{\chi}g(\chi)\conj{\chi}(x)-1\right),
\]
which is harder to work with, even though $g(\varepsilon)=0$ and
$g_{0}(\varepsilon)\neq0$. Therefore, we will be using the Gauss
sums $g_{0}(\chi)$ as opposed to $g(\chi)$.
\end{remark}

\subsubsection{$p$-adic Characters}

We will now construct a concrete multiplicative and additive character
into the $p$-adic numbers $\C_{p}$ to use with the formulas above.
For now, we will use characters from $k=\F_{p}$, leaving finer fields
for later. Given $n=mp^{v}\in\Z$, where $(p,m)=1$ define the \emph{$p$-adic
norm} $|n|_{p}=\frac{1}{p^{v}}$. The completion of $\Z$ with respect
to this norm gives the $p$-adic integers $\Z_{p}$, which can be written
as sequences

\begin{align*}
\mathbb{Z}_{p}:=\underleftarrow{\lim}_{n}\mathbb{Z}/p^{n}\mathbb{Z}= & \left\{ a_{0}+a_{1}p+a_{2}p^{2}+\ldots\mid a_{i}\in[0,p-1]\right\} \\
= & \lim_{n\to\infty}\left(\sum_{i=0}^{n}a_{i}p^{i}\right),
\end{align*}
with the last expression being thought of as giving increasingly better
approximations of the corresponding $p$-adic integer as $n\to\infty$.
Taking the field of fractions gives $\Q_{p}=\text{Frac}(\Z_{p})$
whose algebraic closure $\overline{\Q_{p}}$ is not complete. The
completion of $\overline{\Q_{p}}$ is $\C_{p}$, and is also algebraically
closed. 
\begin{lemma}[Hensel's Lemma]
\label{lemma: hensel's lemma}Suppose that $\overline{f}\in\F_{p}[x]$
and let $f\in\Z_{p}[x]$ be any lift (so that $f\equiv\overline{f}\mod p$).
If $\overline{\alpha}\in\F_{p}$ is a simple root of $\overline{f}$,
then there exists a unique $\alpha\in\Z_{p}$ such that 
\[
F(\alpha)=0\quad\text{and}\quad a\equiv\overline{\alpha}\modp.
\]
\end{lemma}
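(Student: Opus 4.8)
The plan is to construct $\alpha$ by Newton's method of successive approximation, leaning on the completeness of $\Z_p$. First I would fix any lift $\alpha_{0}\in\Z_{p}$ of $\overline{\alpha}$. The hypothesis that $\overline{\alpha}$ is a \emph{simple} root means exactly that $\overline{f}'(\overline{\alpha})\neq0$ in $\F_{p}$, so $f'(\alpha_{0})\equiv\overline{f}'(\overline{\alpha})\not\equiv0\pmod{p}$, i.e. $f'(\alpha_{0})$ is a unit in $\Z_{p}$. Likewise $f(\alpha_{0})\equiv\overline{f}(\overline{\alpha})=0\pmod p$. This invertibility of the derivative is the crucial structural input that makes the whole iteration work.

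Next I would record the purely algebraic Taylor identity: for any $a,h$ one has
\[
f(a+h)=f(a)+f'(a)\,h+h^{2}\,g(a,h),
\]
with $g(a,h)\in\Z_{p}[a,h]$, obtained by expanding each monomial $(a+h)^{i}$ by the binomial theorem. The key point is that every term beyond the linear one carries a factor $h^{2}$ and has $p$-integral coefficients, so no division by factorials is required. With this in hand I define $\alpha_{n+1}=\alpha_{n}-f(\alpha_{n})/f'(\alpha_{n})$. An induction shows that $\alpha_{n}\equiv\alpha_{0}\pmod p$, so $f'(\alpha_{n})$ remains a unit throughout, and that the valuation $v_{p}(f(\alpha_{n}))$ increases by at least one at each step: substituting $h=-f(\alpha_{n})/f'(\alpha_{n})$ into the identity cancels the constant and linear terms, leaving $f(\alpha_{n+1})=h^{2}g(\alpha_{n},h)$, whose valuation is at least twice that of $h$, hence at least $2\,v_{p}(f(\alpha_{n}))$.

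Since $v_{p}(\alpha_{n+1}-\alpha_{n})=v_{p}(f(\alpha_{n}))\to\infty$, the sequence $(\alpha_{n})$ is Cauchy and converges to some $\alpha\in\Z_{p}$ by completeness; continuity of the polynomial $f$ gives $f(\alpha)=\lim f(\alpha_{n})=0$, while $\alpha\equiv\alpha_{0}\equiv\overline{\alpha}\pmod p$ by construction. For uniqueness, suppose $\alpha,\beta\in\Z_{p}$ both satisfy the conclusion. Then $\alpha\equiv\beta\pmod p$, so $f'(\beta)$ is again a unit, and the Taylor identity with $a=\beta$, $h=\alpha-\beta$ yields $0=f(\alpha)-f(\beta)=f'(\beta)(\alpha-\beta)+(\alpha-\beta)^{2}g(\beta,\alpha-\beta)$. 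If $\alpha\neq\beta$, comparing $p$-adic valuations forces the unit term $f'(\beta)(\alpha-\beta)$ to have strictly smaller valuation than the quadratic remainder, so the right-hand side cannot vanish, a contradiction; hence $\alpha=\beta$.

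The main obstacle here is conceptual rather than computational: recognizing that the \emph{simple}-root hypothesis is precisely what guarantees $f'$ is a $p$-adic unit along the entire iteration, and that this single fact both drives the superlinear gain in precision (each step at least doubling $v_{p}(f(\alpha_{n}))$) and forces uniqueness via the valuation comparison. Once that observation is isolated, the remaining work is routine valuation bookkeeping together with an appeal to the completeness of $\Z_{p}$.
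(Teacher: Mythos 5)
Your proposal is correct and complete. Note, though, that the paper does not actually prove this lemma at all: its ``proof'' consists of the single line ``See \cite{KOBLITZ},'' so there is no argument in the text to compare against, and what you have written is a self-contained version of the standard successive-approximation proof found in that reference. Your treatment handles the two points where such proofs usually hand-wave: first, you justify the Taylor identity $f(a+h)=f(a)+f'(a)h+h^{2}g(a,h)$ with $g\in\Z_{p}[a,h]$ purely via the binomial theorem, so that no division by factorials occurs and the quadratic remainder is manifestly $p$-integral; second, your uniqueness argument via the ultrametric valuation comparison --- the unit linear term $f'(\beta)(\alpha-\beta)$ has valuation strictly smaller than the quadratic remainder, so the sum cannot vanish unless $\alpha=\beta$ --- is exactly right, and it is the same structural fact (invertibility of $f'$ along the iteration, guaranteed by simplicity of the root) that drives both existence and uniqueness, as you observe. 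One tiny bookkeeping remark: your claim $v_{p}(f(\alpha_{n+1}))\geq 2\,v_{p}(f(\alpha_{n}))$ only yields divergence because $v_{p}(f(\alpha_{0}))\geq 1$ to begin with, which you do establish from $\overline{f}(\overline{\alpha})=0$; with that in place the induction is airtight. The trade-off relative to the paper is simply self-containedness versus brevity; your argument could be inserted verbatim in place of the citation. (Incidentally, the lemma as stated in the paper contains typographical slips --- $F(\alpha)$ should read $f(\alpha)$ and $a$ should read $\alpha$ --- which your proof silently and correctly repairs.)
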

\begin{proof}
\smartqed
See \cite{KOBLITZ}.
\qed
\end{proof}
\begin{proposition}
\label{definition_proposition: techmuller character} For each $x\in\Fcross$
there is a unique $(p-1)$-st root of unity in $\Z_{p}^{*}$ denoted
$\Teich(x)$ or $T(x)$ such that $T(x)\equiv x\modp$. The map $T\colon\Fcross\to\Z_{p}^{*}$
given by $x\mapsto T(x)$ gives a multiplicative character called
the Teichm\"uller character. 
\end{proposition}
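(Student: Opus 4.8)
The plan is to prove the Teichmüller character statement by a direct application of Hensel's Lemma (Lemma~\ref{lemma: hensel's lemma}) to the polynomial $f(X) = X^{p-1} - 1 \in \Z_p[X]$, establishing existence and uniqueness of the lift, and then verifying multiplicativity separately.

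First I would set up the root-lifting. Fix $x \in \Fcross$ and consider the polynomial $\overline{f}(X) = X^{p-1} - 1 \in \F_p[X]$. Every element of $\Fcross$ is a root of $\overline{f}$ since $\Fcross$ is cyclic of order $p-1$, so in particular $x$ is a root. The key point is that $x$ is a \emph{simple} root: the formal derivative is $\overline{f}'(X) = (p-1)X^{p-2} = -X^{p-2}$ in characteristic $p$, which is nonzero at $x \neq 0$. Hence Hensel's Lemma applies with the lift $f(X) = X^{p-1} - 1 \in \Z_p[X]$ and produces a unique $\alpha = T(x) \in \Z_p$ with $f(\alpha) = 0$ and $\alpha \equiv x \pmod p$. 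Since $\alpha^{p-1} = 1$, this $\alpha$ is a $(p-1)$-st root of unity, and as $\alpha \equiv x \not\equiv 0 \pmod p$ it lies in $\Z_p^*$. I would note that uniqueness of the $(p-1)$-st root of unity reducing to $x$ is exactly the uniqueness clause of Hensel's Lemma, since any such root of unity is a root of $f$ congruent to $x$.

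Next I would verify that $T$ is a multiplicative character, i.e. $T(xy) = T(x)T(y)$ for $x, y \in \Fcross$. The clean argument is: $T(x)T(y)$ is a product of $(p-1)$-st roots of unity, hence itself a $(p-1)$-st root of unity, and it satisfies $T(x)T(y) \equiv xy \pmod p$ because $T(x) \equiv x$ and $T(y) \equiv y$. By the uniqueness just established, $T(x)T(y)$ must equal the unique $(p-1)$-st root of unity congruent to $xy$, namely $T(xy)$. This gives multiplicativity, and since $T$ is visibly nontrivial (it is injective modulo $p$), it is a genuine multiplicative character $\Fcross \to \Z_p^*$.

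The main obstacle, though it is minor, is the precise invocation of Hensel's Lemma: one must check that $\overline{f}'(x) \neq 0$ in $\F_p$, which is where the simple-root hypothesis is genuinely used and where the restriction $p \neq 2$ issues (here irrelevant since $p-1 \geq 1$ and the derivative $-x^{p-2}$ is a unit) would surface in a more delicate setting. Everything else is formal: the existence and uniqueness are packaged entirely in Lemma~\ref{lemma: hensel's lemma}, and multiplicativity follows from uniqueness rather than from any computation. I would therefore expect the proof to be short, with the only substantive verification being the simplicity of the root.
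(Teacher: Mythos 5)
Your proof is correct and follows essentially the same route as the paper: apply Hensel's Lemma to $f(X)=X^{p-1}-1$ for existence and uniqueness, then deduce multiplicativity from uniqueness since $T(x)T(y)$ is a $(p-1)$-st root of unity congruent to $xy$ modulo $p$. The only difference is that you explicitly verify the simple-root hypothesis via $\overline{f}'(x)=-x^{p-2}\neq 0$, a detail the paper leaves implicit, which is a small improvement in rigor.
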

\begin{proof}
\smartqed
The elements of $\Fcross$ are the roots of $\overline{f}(X)=X^{p-1}-1$
since each $x\in\Fcross$ satisfies the equation $\overline{f}(X)=0$.
Let $f(X)=X^{p-1}-1\in\Z_{p}[X]$ be a lift of $\overline{f}(X)$.
By Hensel's Lemma, each $x\in\Fcross$ lifts to a unique $(p-1)$-st
root of unity $T(x)\in\Z_{p}^{*}$ such that $T(x)\equiv x\modp$.
Since a product of roots of unity is still a root of unity, for $x,y\in\Fcross$
we have that 
\[
\left(T(x)T(y)\right)^{p-1}=1
\]
in $\Z_{p}$. Since we also have that 
\[
T(x)T(y)\equiv xy\modp
\]
it must be the case that 
\[
T(x)T(y)=T(xy)
\]
by uniqueness in Hensel's Lemma.
\qed
\end{proof}

We will use an explicit description of $T(x)$ as in \cite{CORV1}.  Let
$x$ denote an integer representative of $x\in\Fcross$. We have that
\[
x^{p-1}=1+\mathcal{O}(p)\text{ in }\Z,
\]
and consequently that 
\[
\left(x^{p-1}\right)^{p}=1+{p \choose 1}\mathcal{O}(p)+\mathcal{O}(p^{2})=1+\mathcal{O}(p^{2})\text{ in }\Z.
\]
By raising both sides of this equation to the $p$-th power repeatedly,
it follows that 
\[
x^{p^{n}(p-1)}=1+\mathcal{O}(p^{n+1}),
\]
which is equivalent to 
\[
x^{p^{n+1}}=x+\mathcal{O}(p^{n+1}).
\]
Define 
\begin{equation}
S(x):=\lim_{n\to\infty}x^{p^{n}}.\label{equation: x powers teichmuller description}
\end{equation}
The character $T$ is uniquely determined by the conditions
$T(x)^{p-1}=1$ and $T(x)\equiv x\modp$ for all $x\in\Fcross$, or
equivalently the conditions $T(x)^{p}=T(x)$ and $T(x)\equiv x\modp$
for all $x\in\Fcross$. Since the expression in
equation~(\ref{equation: x powers teichmuller description}) satisfies
both of these conditions, we conclude that $S(x)=T(x)$ for all
$x\in\Fcross$. In fact, defining $T^i\colon \Fcross \to \Z_p^*$ by
$T^i(x)= T(x)^i$ for $i\in\{0,\ldots,p-2\}$ gives a full set of
characters from $\Fcross$ to $\Z_{p}^{*}$. Note that when applied to
$\chi=T^{i}$, equation
(\ref{eq:summation-multiplicative-char-over-grp}) takes the form
\[
\sum_{x\in\Fcross}T^i(x)=\begin{cases}
0, & \text{if }i\not\equiv0\text{ mod }p-1;\\
p-1, & \text{if }i\equiv0\text{ mod }p-1.
\end{cases}
\]

We now turn to constructing an additive character $\theta\colon\mathbb{F}_{p}\to\mathbb{C}_{p}^{\times}$.
Let $\zeta_{p}$ be a $p$-th root of unity in the $p$-adic numbers
and define
\[
\theta(x)=\zeta_{p}^{T(x)}.
\]
Since for $Z\in\Z_{p}$ we have

\[
T(x+y)=T(x)+T(y)+pZ
\]
it follows that
\[
\theta(x+y)=\theta(x)+\theta(y),
\]
so $\theta$ is indeed an additive character. For the root of unity
we can take $\Theta(x)=\exp\left(\pi(x-x^{p})\right)$ and set $\zeta_{p}:=\Theta(1)$, as shown in 
\cite{KOBLITZ}.

Recalling Remark \ref{rem:inverting-gauss-sum-wo-zero}, we consider
the Gauss sum associated to these characters, 
\[
G_{n}=\sum_{x\in\mathbb{F}_{p}^{\times}}\theta(x)T^{n}(x),
\]
where $n\in\Z$. 
\begin{remark}
\label{rem:p-adic-gamma-gauss-sum}This expression can be thought
of as a $p$-adic analog of the classical Gamma function,
\[
\Gamma(s)=\int_{0}^{\infty}\frac{\D t}{t}t^{s}\eul^{-t},
\]
where we think of $T(x)$ as the analog of $t\mapsto t^{s}$, of
$\theta(x)$ as the analog of $t\mapsto \eul^{-t}$, and of summation
over $\F_{p}^{*}$ as the analog of integration with respect to
the Haar measure $\frac{\D t}{t}$. In fact, relations can be proven
in terms of these characters for $\Gamma(s)$ can also be given for
$G_{n}$. 
\end{remark}
In this setting, formula (\ref{eq:char-fourier-inversion}) is
\begin{equation}
\theta(x)=\frac{1}{p-1}\sum_{m=0}^{p-2}G_{-m}T^{m}(x),\label{eq:gauss_sum}
\end{equation}
and if $p-1\nmid n$, we also obtain the relation 
\begin{equation}
G_{n}G_{-n}=(-1)^{n}p.\label{eq:gauss-sum-identity}
\end{equation}

\subsubsection{Relationship with the Periods}

To actually count points, we note that for any polynomial $P(x)\in\F_{p}[x]$,
we have that

\[
\sum_{y\in\mathbb{F}_{p}}\theta(yP(x))=\begin{cases}
p, & \text{if }P(x)=0;\\
0, & \text{if }P(x)\neq0,
\end{cases}
\]
so that 
\[
\sum_{y\in\mathbb{F}_{p}}\sum_{x\in\mathbb{F}_{p}^{3}}\theta(yP(x))=p\cdot N^{*}(X),
\]
where $N^*(X)$ is the number of points on $X\colon \{ P(x)=0 \}$ with
coordinates in $\mathbb{F}_{p}^{*}$. We will illustrate the use of
this formula using the Fermat family of elliptic curves.
\begin{example}
Let $F_{\psi}=\sum_{i=1}^{3}x_{i}^{3}-3\psi x_{1}x_{2}x_{3}\in\F_{p}[x]$, and let
$N^{*}(Z_\psi)$ denote the number of nonzero $\F_{p}$-rational points
on $Z_{\psi}\colon\{F_{\psi}=0\}$. We have that
\[
p N^{*}(Z_\psi)-(p-1)^{3}=\sum_{y,x_{i}\in\mathbb{F}_{p}^{\times}}\theta(yF_{\psi}(x)),
\]
and by (\ref{eq:gauss_sum}) that 
\begin{align*}
\theta(yF(x)) & =\left(\prod_{i=1}^{3}\theta(yx_{i}^{3})\right)\theta(-3\psi yx_{1}x_{2}x_{3})\\
 & =\frac{1}{p-1}\sum_{m=0}^{p-2}G_{-m}T^{m}\left(-3\psi y\right)\prod_{i=1}^{3}\theta(yx_{i}^{3})T^{m}(x_{i}).
\end{align*}
Therefore, 
\begin{align*}
p N^{*}(Z_\psi)-(p-1)^{3} & =\sum_{\substack{x\in\left(\Fcross\right)^{3}\\
y\in\Fcross
}
}\frac{1}{p-1}\sum_{m=0}^{p-2}G_{-m}T^{m}\left(-3\psi y\right)\prod_{i=1}^{3}\theta(yx_{i}^{3})T^{m}(x_{i})\\
 & =\frac{1}{p-1}\sum_{m=0}^{p-2}G_{-m}T^{m}(-3\psi)\sum_{y\in\Fcross}T^{m}(y)\left(\sum_{w\in\Fcross}\theta(yw^{3})T^{m}(w)\right)^{3},
\end{align*}
where we have used the fact that $\sum_{x\in(\Fcross)^{3}}\theta(0F(x))=(p-1)^{3}$
in the first step, and renamed the variables $x_{i}$ to $w$, since
for each $x_{i}$ the sum is identical. Suppose that $3\nmid(p-1)$.
Since $3$ and $(p-1)$ are relatively prime, there exist $a,b\in\Z$
such that $3a+b(p-1)=1$. In particular, we have that 
\[
3a\equiv1\text{ mod }p-1.
\]
Since $T^{l(p-1)}$ is the identity character for any $l\in\Z$, this
implies that 
\[
T^{m}=T^{3am}=\left(T^{am}\right)^{3}
\]
and so 
\begin{align*}
p N^{*}(Z_\psi)-(p-1)^{3} & =\frac{1}{p-1}\sum_{m=0}^{p-2}G_{-m}T^{m}(-3\psi)\sum_{y\in\Fcross}\left(\sum_{w\in\Fcross}\theta(yw^{3})T^{m}(w)T^{ma}(y)\right)^{3}\\
 & =\frac{1}{p-1}\sum_{m=0}^{p-2}G_{-m}T^{m}(-3\psi)\sum_{y\in\Fcross}\left(\sum_{w\in\Fcross}\theta(yw^{3})T^{ma}(w^{3})T^{ma}(y)\right)^{3}\\
 & =\frac{1}{p-1}\sum_{m=0}^{p-2}G_{-m}T^{m}(-3\psi)\left((p-1)G_{ma}^{3}\right)\\
 & =p\sum_{m=0}^{p-2}\frac{G_{ma}^{3}}{G_{m}}T^{m}(-3\psi)(-1)^{m}.
\end{align*}
To simplify this expression, note that the $m$-th Gauss sum $G_{m}=G_{m+l(p-1)}$
depends only on the class of $m$ modulo $p-1$, since $m$ only appears
in the exponent of $T$ within the sum. Define the map $\phi\in\text{End}\left(\Z/(p-1)\Z\right)$
by $m\mapsto am$. Its inverse is given by $\phi^{-1}\colon k\mapsto3k$,
which allows us to rewrite the expression above as 
\begin{align*}
N^{*}(Z_\psi) & =\sum_{k=0}^{p-2}\frac{G_{k}^{3}}{G_{3k}}T^{3k}(-3\psi)(-1)^{3k}+(p-1)^{3}\\
 & =1+\sum_{k=1}^{p-2}\frac{G_{k}^{3}}{G_{3k}}T^{3k}(3\psi)+(p-1)^{3},
\end{align*}
where we have used the fact that $T(-1)=-1$ and that 
\[
G_{0}=\sum_{x\in\Fcross}\theta(x)=-1.
\]

\end{example}
The analogous calculation for the quintic threefold is performed in
\cite{CORV1}. The result after accounting for points for which $x_{i}=0$
for some coordinate is in the case $5\nmid(p-1)$ given by the expression

\[
N(X_\psi)=1+p^{4}+\sum_{m=1}^{p-2}\frac{G_{m}^{5}}{G_{5m}}\Teich^{-m}(\lambda),
\]
where $\lambda=1/(5\psi)^{5}$. Using (\ref{eq:gauss-sum-identity})
and $-m\mapsto(p-1)-m$, which does not change the expression, we obtain
exactly equation (\ref{eq:quintic-period-nr-points}). If we keep
Remark \ref{rem:p-adic-gamma-gauss-sum} in mind, this allows us to
interpret the number of points $N(X_\psi)$ as the $p$-adic analog
to the period (\ref{eq:fundamental-period-quintic}). We can relate
the number of points to the periods further. Let $g_{i}(z$) be defined
as in equation (\ref{eq:quintic-log-term-solutions}) and $\vartheta=\lambda\frac{\D }{\D \lambda}$.
Then it can be shown \cite[equation (6.1)]{CORV1} that 
\begin{align}
  N(X_\psi) & =\,^{(p-1)}g_{0}(\lambda^{p^{4}})+\left(\frac{p}{1-p}\right)\,^{(p-1)}(\vartheta g_{1})(\lambda^{p^{4}}) \nonumber \\
  &\quad +\frac{1}{2!}\left(\frac{p}{1-p}\right)^{2}\,^{(p-1)}(\vartheta^{2}g_{2})(\lambda^{p^{4}})\nonumber \\
  &\quad\quad + \frac{1}{3!}\left(\frac{p}{1-p}\right)^{3}\,^{(p-1)}(\vartheta^{3}g_{3})(\lambda^{p^{4}}) \nonumber \\
  &\quad\quad\quad+\frac{1}{4!}\left(\frac{p}{1-p}\right)^{4}\,^{(p-1)}(\vartheta^{4}g_{4})(\lambda^{p^{4}})\mod\,
  p^{5}.\label{eq:nr-points-as-all-solns-and-semiperiod}
\end{align}
The last term makes an appearance in a period-like integral called
a \emph{semi-period} in \cite{CORV1}. In particular, analogously to
equation (\ref{eq:general-quintic-solutions}), let 
\[
\varpi_{4}(z)=\sum_{j=0}^{4}{4 \choose j}g_{j}(z)(\log z)^{4-j}.
\]
This expression can also be given as $\int_{\gamma'}\omega$, but where
$\partial\gamma'\neq0$. While this calculation directly demonstrates that
periods calculate the number of points, it does not explain
the \emph{reason} for this phenomenon. More conceptual approaches are 
considered in Section~\ref{sec:Zeta-Functions-and}.

An alternative point of view can be given by realizing period and
semi-period integrals in a different form. Let $Q_{\psi}:=\sum_{i=1}^{5}x_{i}^{5}-5\psi\prod_{i=1}^{5}x_{i}$.
Using the calculation in Example \ref{expl:FermatResidueCalculation}
we can write the fundamental period as
\[
\varpi_{0}=\frac{5\psi}{(2\pi \i)^{4}}\int_{\gamma_{2}\times\ldots\times\gamma_{5}}\frac{x_{1}\D x_{2}\ldots \D x_{5}}{Q_{\psi}(x)},
\]
where $x_{1}$ is kept constant and $\gamma_{i}$ is a circle around
the origin as in the example. Using the integral
\[
\frac{1}{Q_{\psi}}=\int_{0}^{\infty}\eul^{-tQ_{\psi}}dt
\]
we can rewrite this period as 
\begin{align*}
\varpi_{0} & =\frac{5\psi}{(2\pi \i)^{4}}\int_{\bar{\gamma}_{2}\times\ldots\times\bar{\gamma}_{5},t}\ x_{1}\eul^{-tQ_{\psi}(x)}\D x_{2}\ldots \D x_{5}dt\\
 & =\frac{5\psi}{(2\pi \i)^{4}}\int_{\bar{\gamma}_{1}\times\ldots\times\bar{\gamma}_{5}}\ \eul^{-Q_{\psi}(x)}\D x_{1}\ldots \D x_{5}\,,
\end{align*}
where the last step follows from re-absorbing the parameter $t$ into
the variables via $x_{i}\mapsto s^{-\frac{1}{5}}x_{i}$, and where
$\bar{\gamma}_{i}$ is the corresponding change in the domain of
integration. This point of view gives a more algebraic description of
the periods which we now describe in rough terms. We can consider
periods (and semi-periods) of some hypersurface
$\{Q(x_{1},\ldots,x_{n})=0\}$ as integrals of the form
\begin{equation}
\int_\Gamma x^{\vec v}\eul^{-Q}\,\D x_{1}\ldots \D x_{n},\label{eq:period-d-module-form}
\end{equation}
where $x^{\vec v}$ is a monomial in $\C[x_{1},\ldots,x_{n}]$, and
$\Gamma$ is a cycle on $\C^n$ such that $\eul^{-Q}$ goes to zero
sufficiently quickly. Such integrals have a cohomological analog.  In
particular, we can think of the integral
(\ref{eq:period-d-module-form}) as the element $x^{\vec v}$ in the
module $M=\C[x]\,\eul^{-Q}$ of the \emph{Weyl algebra $A_{n}$}
generated by $x_{1},\ldots,x_{n},\partial_{1},\ldots,\partial_{n}$
modulo the relations
\[
[\partial_{i},\partial_{j}]=0,\ [x_{i},x_{j}]=0,\text{ and }[\partial_{i},x_{j}]=\delta_{i,j}\,,
\]
where the action of $\partial_{i}$ is taken to be $\frac{\partial}{\partial x_{i}}-\frac{\partial Q}{\partial x_{i}}$
(capturing the chain rule). Assuming that $Q$ is non-singular, it
can be shown that the algebraic de Rham cohomology of the module $M$
is given by
\[
H(M)=\frac{M}{\partial_{1}M+\ldots+\partial_{n}M}\ \D x_{1}\ldots \D x_{n},
\]
and that integrals of the form (\ref{eq:period-d-module-form}) are
independent of the choice of representative of the cohomology class of
$x^{\vec v}$. The module $M$ can be similarly defined over $\C_p$, in
which case there exists an action of Frobenius on cohomology that can
be used to study arithmetic of the periods. For a more formal
explanation and further details, we refer the reader to
\cite{SCHSHA, SHA}.

\begin{remark}
While the results for the quintic were given in the case $5\nmid(p-1)$,
in \cite{CORV1} the case $5\mid(p-1)$ in which additional technical difficulties
arise is also covered.
\end{remark}

\begin{remark}
Since the calculation for the number of points is determined by character
formulas which are valid for any finite group, we can calculate the
number of points on (\ref{eq:quintic-def}) with coordinates in field
extensions $k_{r}=\F_{p^r}$ of $k=\F_{p}$ by producing multiplicative
and additive characters from these fields. If $q=p^r$, then the Teichm\"uller
character $T\colon k_r \to\Q_{p}$ is given by the expression
\[
T(x)=\lim_{n\to\infty}x^{q^{n}},
\]
while an additive character $\theta_{r}\colon k_r \to\Q_{p}$ is
given by composing $\theta$ with the (additive) trace map $\text{Tr}\colon k_r \to k$
given by $\text{Tr}(x)=x+x^{p}+\ldots+x^{p^{r-1}}$. 
\end{remark}

\section{Zeta Functions and Mirror Symmetry\label{sec:Zeta-Functions-and}}

Let $k=\F_{q}$ be a finite field with $q=p^{k}$ elements, $k_{r}$ an
extension of degree $r$, and $X$ a smooth variety set of dimension $d$
over $k$.  We will usually take $k=\F_{p}$. If we let $N_r(X)$ denote
the number of points of $\bar{X}:=X\times\bar{k}$ rational
over $k_{r}$, then the generating function
\[
Z(X,T)=\exp\left(\sum_{r=1}^{\infty}N_{r}(X) \frac{T^{r}}{r}\right)
\]
is called the \emph{zeta function} of $X$. From the
Weil Conjecture (since proved; see \cite{HARTSHORNE} for a broader overview) we know that
\[
Z(X,T)=\frac{P_{1}(T)P_{3}(T)\ldots P_{2d-1}(T)}{P_{0}(T)P_{2}(T)\ldots P_{2d}(T)},
\]
where $P_{0}(T)=1-T$, $P_{2d}(T)=1-q^{d}T$ and $P_{i}(T)\in1+T\Z[T]$
for each $1\le i\leq2d-1$. Furthermore, the degree of $P_{i}$ equals
the $i$-th Betti number $b_{i}$ of $X$ and
\begin{align*}
  P_{i}(T)
  &=\det\left( I - T\Frob^* \mid H^i(X) \right)
  &=\prod_{j=1}^{b_{i}}(1-\alpha_{ij}T),
\end{align*}
where $\alpha_{ij}$ are algebraic integers such that $|\alpha_{ij}|=q^{i/2}$, 
$H^i(X)$ is a suitable cohomology theory, for instance \'Etale cohomology,
and $\Frob^*$ is the map on cohomology induced from the Frobenius morphism
$\Frob\ colon \bar X \to \bar X$ given by $(x_i) \mapsto (x_i^q)$.

What can be said about the relationship between zeta functions of
a pair of mirror quintic threefolds $M$ and $W$ belonging, respectively,
to the quintic family $\cM$ defined by (\ref{eq:quintic-def}), and
its mirror $\cW$ outlined in Example \ref{exa:PFE-for-mirror-of-quintic}?
The Weil Conjecture and the Hodge diamond of $M$ imply that if
$Z(M,T)=N(T)/D(T)$, then we have $\deg N(T)=2h^{2,1}(M)+2=204$ and
$\deg D(T)=2h^{1,1}(M)+2=4$.  Since $h^{2,1}$ and $h^{1,1}$ are
exchanged under mirror symmetry, we might hope that there is some kind
of zeta function, which Candelas et al. call the ``quantum'' zeta
function in \cite{CORV2}, that satisfies $Z^{Q}(M,T)=Z^{Q}(W,T)^{-1}$.
This zeta function cannot be the usual zeta function, since that would
imply the impossible relation $N_{r}(M)=-N_{r}(W)$. However, numerical
calculations by Candelas, de la Ossa, and Rodriguez Villegas in
\cite{CORV2} show that if $\mid(p-1)$, then there is a relation
\[
Z(M,T)\equiv\frac{1}{Z(W,T)}\equiv(1-pT)^{100}(1-p^{2}T)^{100}\ \mod5^{2}.
\]
This congruence can be seen as coming from the fact that the zeta
functions of $M$ and $W$ share certain terms. In particular,
\begin{equation}\label{eq:zeta-Mpsi}
Z(M,T)=\frac{R_{\varepsilon}(T,\psi) \prod_{\vec v} R_{\vec v}(T, \psi)}{(1-T)(1-pT)(1-p^{2}T)(1-p^{3}T)}
\end{equation}
and
\begin{equation}\label{eq:zeta_Wpsi}
Z(W,T)=\frac{R_{\varepsilon}(T,\psi)}{(1-T)(1-pT)^{101}(1-p^{2}T)^{101}(1-p^{3}T)},
\end{equation}
where $R_\varepsilon(T,\psi)$ is of degree $4$, and each
$R_{\vec v}(T,\psi)$ comes from a period of $M_\psi$ as described
below. The relationship between the zeta function of a family of
manifolds and the solutions of the Picard-Fuchs equation was first
observed in greater generality by Katz in \cite{KATZ}. However,
because of its computational nature, we will first illustrate the
numerical calculation of \cite{CORV2}, and then proceed to outline
more conceptual explanations due to Kadir and Yui \cite{KYUIMM},
Kloosterman \cite{KLOOSTERMAN}, and Goutet
\cite{GOUTET1,GOUTET2,GOUTET3}.

\subsection{Computational Observations}

We have seen in Section~\ref{sec:Differentials-on-Hypersurfaces} that
periods of a hypersurface $X\subset\P^{n}$ given by $\{Q(x)=0\}$ are
determined by monomials
$x^{\vec{v}}=x_{1}^{v_{1}}\ldots x_{n+1}^{v_{n}}$ modulo those in the
Jacobian ideal $J(Q)$ of $Q$, since we can write every period in terms
of
\begin{equation}
\varpi_{v}:=\int_{\Gamma}\frac{x^{\vec{v}}\Omega}{Q^{k(\vec{v})}},\label{eq:period-from-monomial-xv}
\end{equation}
where $\Gamma$ is a cycle on $\P\setminus V$ described earlier,
and $k(\vec v)$ is determined by $k(\vec v)\deg\, Q=(v_{1}+\ldots+v_{5})+(n+1)$.
Using this description and Griffiths's formula (\ref{eq:ReductionFormula-1})
we can find relations amongst the periods, and in fact also Picard-Fuchs
equations, in a diagrammatic way. In the case of the quintic $X_{\psi}$
given in (\ref{eq:quintic-def}), choose some $i\in\{1,\ldots,n\}$ and
set $A_{i}=\frac{1}{5}x^{\vec v+ \vec{e_{i}}}$ as well as $A_{j}=0$ for $j\neq i$,
where $\vec{e_{i}}$ is the standard basis vector of $\Z^{n}$ with $1$
in the $i$-th slot and zeros elsewhere. If $\varepsilon=(1,\ldots,1)$,
then Griffiths's formula gives
\[
\frac{x^{\vec v}(x^{5\vec{e_{i}}}-\psi x^{\varepsilon})\Omega}{Q^{k(\vec v)+1}}=\frac{1}{5k(\vec v)}\frac{(v_{i}+1)x^{\vec v}\Omega}{Q^{k(\vec v)}}
\]
up to an exact form. If we use the shorthand $\vec v=(v_{1},v_{2},v_{3},v_{4},v_{5})$
for the period (\ref{eq:period-from-monomial-xv}) determined by the
monomial $x^{\vec v}$, then integrating this expression is a relation
between the three periods $\vec v$, $\vec v+5\vec{e_{i}}$ and $\vec v+\varepsilon$ which
we can encode in the diagram
\[
\begin{array}[t]{ccc}
\vec v & \rightarrow & \vec v+\varepsilon\\
\downarrow D_{i}\\
\vec v+5 \vec{e_{i}}
\end{array}
\]
where $D_{i}=\frac{\partial}{\partial x_{i}}\circ x_{i}$ denotes
the operator which gave rise the this relation. To get a differential
equation with respect to $\psi$ out of such relations we can use
\begin{equation}
\frac{\D }{\D \psi}\frac{x^{\vec v}\Omega}{Q^{k(\vec v)}}=\frac{-5k(\vec v)x^{\vec v+\varepsilon}}{Q^{k(\vec v)+1}},\label{eq:derivative-of-v-wrt-psi}
\end{equation}
which allows us to exchange $\vec v+\varepsilon$ for a derivative of $\vec v$.
\begin{example}
\label{exa:combinatorial-griffiths-dwork-example}Simply because the
diagrams are more manageable, we will illustrate this method on the
Fermat family of elliptic curves. Following \cite{CORV1}, we will also
change the form of the period integral encoded by the vector $\vec v$
to 
\[
\vec v=\frac{1}{(2\pi \i)^{3}}\int_{\Gamma}\frac{x^{\vec v}}{F_{\psi}^{k(\vec v)+1}}\, \D x_{1}\D x_{2}\D x_{3},
\]
where $Z_{\psi}\colon\{F_{\psi}=0\}$ defines an element of the family,
and $\Gamma$ is now a product of tubes around the loci $\partial Q/\partial x_{i}=0$.
Define $E:=x^{\varepsilon}=x_{1}x_{2}x_{3}$ and for $n\geq1$ let
\[
E_{n}:=\frac{E^{n-1}}{Q^{n}}\quad\text{and}\quad I_{n}:=\frac{1}{(2\pi \i)^{3}}\int_{\Gamma}E_{n}\ \D x_{1}\D x_{2}\D x_{3}.
\]
Applying the procedure above to $(0,0,0)$, $(1,1,1)$, and $(2,2,2)$
we have the diagram 
\[
\begin{array}{ccccccc}
 &  &  &  & (2,2,-1) & \rightarrow & (3,3,0)\\
 &  &  &  & \downarrow D_{3}\\
(0,0,0) & \rightarrow & (1,1,1) & \rightarrow & (2,2,2)\\
\downarrow D_{1} &  & \downarrow D_{1}\\
(3,0,0) & \rightarrow & (4,1,1)\\
\downarrow D_{2}\\
(3,3,0)
\end{array}
\]
in which the upper right dependence corresponds to 
\begin{equation}\label{eq:upper_right}
  D_{3}\left(\frac{x_{1}^{2}x_{2}^{2}}{x_{3}Q^{2}}\right)=
  \frac{\partial}{\partial x_{3}}\left(\frac{x_{1}^{2}x_{2}^{2}}{Q^{2}}\right)
  =  \frac{6\psi x_{1}^{3}x_{2}^{3}}{Q^{3}}-\frac{6E^{2}}{Q^{3}}.
\end{equation}
The relations coming from differentiation are 
\begin{equation}
I_{n+1}=\frac{1}{3n}\frac{\D }{\D \psi}I_{n}\quad\text{and}\quad I_{n+1}=\frac{1}{3^{n}n!}\frac{\D ^{n}}{\D \psi^{n}}I_{1}\,,\label{eq:gauss_manin_of_I1}
\end{equation}
which we use in the dependence diagram above. We start computing the
actual relations starting from the bottom of the diagram, replacing
terms until we have a relation between only the periods corresponding
to $(0,0,0),\ (1,1,1)$ and $(2,2,2)$. We get rid of the $(3,3,0)$
period because it ``loops around'' the diagram by equation (\ref{eq:upper_right}).
The end result is
\end{example}
\begin{align*}
E_{1}+3\psi E_{2}+6\left(\frac{\psi^{2}-1}{\psi}\right)E_{3} &=\frac{\partial}{\partial x_{1}}\left(\frac{x_{1}^{2}x_{2}x_{3}}{Q^{2}}+\frac{x_{1}}{Q}\right) \\ 
& \qquad +\frac{\partial}{\partial x_{2}}\left(\frac{x_{2}x_{1}^{3}}{Q^{2}}\right)+\frac{\partial}{\partial x_{3}}\left(\frac{x_{1}^{2}x_{2}^{2}}{\psi Q^{2}}\right),
\end{align*}
which results in the Picard-Fuchs equation 
\[
\left[3+3\psi\frac{\partial}{\partial\psi}+\left(\frac{\psi^{2}-1}{\psi}\right)\frac{\partial^{2}}{\partial\psi^{2}}\right]f(\psi)=0
\]
satisfied by the period
\[
I_{1}=\int_{\Gamma}\frac{\D x_{1}\D x_{2}\D x_{3}}{F_{\psi}}.
\]
Candelas et al. use such diagrams to find
the Picard-Fuchs equations for \emph{all }$204$ periods of the quintic
family (\ref{eq:quintic-def}). Note that $x^{\varepsilon}$ is invariant
under the diagonal symmetry group $\mathcal{G}$ of $X_{\psi}$ defined
in Example \ref{exa:PFE-for-mirror-of-quintic}. By equation (\ref{eq:derivative-of-v-wrt-psi})
this means that the periods $\varpi_{\vec v}$ and $\varpi_{\vec v+\varepsilon}$
correspond to the same representation of the group $\mathcal{G}$. Moreover,
the periods can be classified according to the transformation of $x^{\vec v}$
under the group into the sets
\begin{gather*}
\{1,x^{\varepsilon},x^{2\varepsilon},x^{3\varepsilon}\},\ \{x_{1}^{4}x_{2},x_{1}^{4}x_{2}x^{\varepsilon}\},\ \{x_{1}^{3}x_{2}^{2},x_{1}^{3}x_{2}^{2}x^{\varepsilon}\},\\
\{x_{1}^{2}x_{2}x_{3},x_{1}^{2}x_{2}x_{3}\},\ \{x_{1}^{2}x_{2}^{2}x_{3},x_{1}^{2}x_{2}^{2}x_{3}x^{\varepsilon}\},\text{ and }\{x_{1}^{4}x_{2}^{3}x_{3}^{2}x_{4}\},
\end{gather*}
given up to permutation of the variables.  In \cite{CORV1}, the
diagrammatic method is applied to each group of monomials. Choose a
representative $\vec v$ of each one, and denote the corresponding
Picard-Fuchs equation, which turns out to be hypergeometric in each
case, by $\cL_{\vec v}$. In the $p$-adic setting the hypergeometric
expressions for these allow, by comparison of coefficients, to rewrite
the number of points on the quintic (\ref{eq:quintic-def}) in terms of
all the periods as
\[
N(X_\psi)=p^{4}+\sum_{\vec v}\gamma_{\vec v}\sum_{m=0}^{p-2}\beta_{_{\vec v,m}}\Teich^{m}(\lambda),
\]
where the outer sum is over representative monomials in the sets above,
$\gamma_{\vec v}$ accounts for the number of permutations in each group,
and $\beta_{\vec v,m}$ is a ratio of Gauss sums
\[
\beta_{\vec v,m}=p^{4}\frac{G_{5m}}{\prod_{i=1}^{5}G_{m+k v_{i}}}.
\]
A consequence is an expression for the number of points
with coordinates in $k_r$ that decomposes as 
\begin{equation}\label{eq:quintic-nr-pts-decomp-into-periods}
N_{r}(X_\psi)= N_{\varepsilon,r}(X_\psi) + \sum_{\vec v}N_{\vec v,r}(X_\psi),
\end{equation}
so that $R_{\vec v}(T,\psi)$ arises as
$\sum_{r > 0} N_{\vec v,r} \frac{T^r}{r}$.  At the $\psi = 0$ (or
Fermat) point of the moduli space, equation
(\ref{eq:quintic-nr-pts-decomp-into-periods}) can equivalently be
given in terms of Fermat motives. This is a consequence of the
Kadir-Yui monomial-motive correspondence, which is a one-to-one
correspondence between the monomial classes given above and explicitly
realized Fermat motives. For more details and applications to mirror
symmetry, see \cite{KYUIMM}. The zeta function (\ref{eq:zeta_Wpsi})
can also be found by considering monomial classes, by understanding
the mirror $\cW$ torically and using \emph{Cox variables} instead of
$x^{\vec v}$. For more details, we refer the reader to \cite{CORV2}.

How can we interpret $\prod_{\vec v} R_{\vec v}(T,\psi)$ appearing in
(\ref{eq:zeta-Mpsi})?  Candelas et al. numerically observed for small
primes, and conjectured for all primes, that this product can be
written as
\[
R_A(qT,\psi)^{10} R_B(qT,\psi)^{15},
\]
where $R_A(T,\psi)$ and $R_B(T,\psi)$ arise as numerators of the
zeta functions of affine curves
\[
A\colon y^5 = x^2 (1-x)^3(x-\psi^5)^2 
\quad \text{and} \quad
B\colon y^5 = x^2(1-x)^4(x-\psi^5),
\]
respectively. This claim was proven by Goutet in \cite{GOUTET1} via
Gauss sum techniques. In fact, he has proven similar results more
generally.  An immediate generalization of (\ref{eq:quintic-def}) is the
Dwork family of hypersurfaces
\begin{equation}\label{eq:dwork-family}
X_\psi \colon \{ x_1^n + \ldots + x_n^n - n\psi x_1 \ldots x_n = 0 \} \subseteq \P_k^{n-1},
\end{equation}
where $\psi \in k$ and we only consider nonsingular $X_\psi$.
Arithmetic of this family and its mirror was considered by Wan
in \cite{WAN} and Haessig in \cite{HAESSIG}.  The mirror family is
constructed in two stages, analogously to the quintic case. First we
form the quotient $Y_\psi := X_\psi / G$, where
\[
G = \left\{ (\xi_1, \ldots, \xi_n) \mid \xi_i \in k, \xi_i^n=1, \xi_1 \ldots \xi_n = 1 \right\}
\]
is the group of diagonal symmetries of $X_\psi$. Wan calls $Y_\psi$
the \emph{singular mirror} of $X_\psi$. It can be explicitly realized
as the projective closure of the affine hypersurface
\[
g(x_{1},\ldots,x_{n-1})=x_{1}+\ldots+x_{n-1}+\frac{1}{x_{1}\ldots x_{n-1}}-n\psi=0,
\]
in the torus $(k^{*})^{n-1}$, which enables the use of Gauss sums to
count points. The mirror family $\{ W_\psi \}$ is obtained by
resolving the singularities of $\{ Y_\psi \}$. Picking a manifold from
each family will produce a mirror pair, and if the two parameter
values $\psi$ are equal, then $\{X_{\psi},W_{\psi}\}$ is called a
\emph{strong mirror pair}.  Now, reciprocal zeros $\beta_{i}$ and
poles $\gamma_{i}$ of the zeta function
$Z(X,T)=\prod_{i}(1-\beta_{i}T)/\prod_{j}(1-\gamma_{j}T)$) of some
smooth variety $X$ determine the number of points over various
extensions of $k$, since we have that
\[
\sum_{r=1}^{\infty}N_{r}(X) T^{r}=t\frac{\D \log(\zeta T)}{\D T}=\sum_{j}\frac{\gamma_{j}T}{1-\gamma_{j}T}-\sum_{i}\frac{\beta_{i}T}{1-\beta_{i}T},
\]
which implies
\[
N_{r}(X)=\sum_{j}\gamma_{j}^{r}-\sum_{i}\beta_{i}^{r}.
\]
Furthermore, if we define the slope of $\alpha\in\Q$ as 
\[
s(\alpha)=\text{ord}_{p}(\alpha)\,
\]
where $\text{ord}_{p}$ denotes the $p$-adic order of $\alpha$,
then $\beta_{i},\gamma_{j}$ as defined above satisfy 
\[
0\leq s(\beta_{i}),s(\gamma_{j})\leq2d
\]
and are rational numbers in the range $[0,\dim X]$. We now select
a part of the zeta function of $X$ 
\[
Z_{[0,1)}(X,t)=\prod_{\alpha_{i}\in\{\beta_{i},\gamma_{j}\},0\leq s(\alpha_{i})<1}(1-\alpha_{i}t)^{\pm1}.
\]
A character formula calculation gives the following theorem, which is
the main result of \cite{WAN}.
\begin{theorem}
  For a strong mirror pair $(X_\psi, W_\psi)$ and $r \in \Z_{>0}$ we have
  \[
  N_{r}(X_{\psi})\equiv N_{r}(Y_\psi) \equiv N_{r}(W_{\psi})\,\mod\, q^{r},
  \]
  or equivalently
  \[
  Z_{[0,1)}(X_{\psi},T)= Z_{[0,1)}(Y_\psi, T) = Z_{[0,1)}(W_{\psi},T).
  \]
\end{theorem}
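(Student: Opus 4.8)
The plan is to prove the congruences $N_r(X_\psi) \equiv N_r(Y_\psi) \equiv N_r(W_\psi) \bmod q^r$ directly, and to read off the equality of the slope-$[0,1)$ zeta functions from them. For the reduction, recall that $N_r(X) = \sum_j \gamma_j^r - \sum_i \beta_i^r$, where the $\beta_i$ are reciprocal zeros and the $\gamma_j$ reciprocal poles. If such a root $\alpha$ has slope $s(\alpha)\ge 1$, i.e.\ $\operatorname{ord}_q(\alpha)\ge 1$, then $\operatorname{ord}_q(\alpha^r)=r\,s(\alpha)\ge r$, so $\alpha^r$ is divisible by $q^r$. Hence $N_r \bmod q^r$ is computed entirely by the roots of slope $<1$, that is, by $Z_{[0,1)}$; in one direction this makes the zeta equality imply the congruences immediately, and conversely, since the slope-$<1$ roots are exactly those detected modulo $q^r$, the full family of congruences is equivalent to the equality of the $Z_{[0,1)}$. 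It therefore suffices to work with point counts.

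First I would compare $X_\psi$ with its singular mirror $Y_\psi = X_\psi / G$. Exactly as in the quintic case, I expand $N_r(X_\psi)$ by the character (Gauss-sum) formula and organize the output by the monomial classes $\vec v$ under the diagonal symmetry group $G$, writing $N_r(X_\psi) = N_{\varepsilon,r}(X_\psi) + \sum_{\vec v} N_{\vec v,r}(X_\psi)$ as in \eqref{eq:quintic-nr-pts-decomp-into-periods}. The trivial, $G$-invariant class $\varepsilon$ reproduces the character sum over the torus $(k^*)^{n-1}$ that counts the affine points of $Y_\psi$, while the remaining classes are precisely the terms appearing in $Z(X_\psi,T)$ but not in the mirror zeta function \eqref{eq:zeta_Wpsi}. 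The claim is that every non-invariant term satisfies $N_{\vec v,r}\equiv 0 \bmod q^r$; granting this, and after the routine bookkeeping matching the points at infinity (the strata where some $x_i=0$) of $X_\psi$ against the projective closure of $Y_\psi$, one obtains $N_r(X_\psi)\equiv N_r(Y_\psi)\bmod q^r$.

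The heart of the argument, and the step I expect to be the main obstacle, is the slope estimate on the non-invariant Gauss-sum terms. Each $N_{\vec v,r}$ is assembled from ratios of Gauss sums of the shape $q^{\bullet}\,G_{nm}/\prod_{i}G_{m+k(\vec v)v_i}$, and I must show these have $q$-adic slope at least $1$, uniformly in $m$ and across the extensions $k_r$. This is where Stickelberger's theorem and the Gross--Koblitz formula enter: they express $\operatorname{ord}_q$ of a Gauss sum through base-$p$ digit sums (fractional parts) of the relevant exponents, and the required lower bound becomes a combinatorial statement reflecting the Hodge-theoretic fact that a non-invariant monomial class lies in a deeper piece of the Hodge filtration, hence contributes an extra factor of $q$. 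Indeed, in the quintic case the extra numerator of \eqref{eq:zeta-Mpsi} is observed to factor as $R_A(qT,\psi)^{10}R_B(qT,\psi)^{15}$, whose reciprocal roots carry slope $\ge 1$ precisely because of the $qT$. Carrying out this digit-sum bound cleanly and uniformly in $r$ is the technical crux.

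Finally I would pass from the singular mirror $Y_\psi$ to its resolution $W_\psi$. Since $W_\psi \to Y_\psi$ is an isomorphism away from the singular locus, $N_r(W_\psi)-N_r(Y_\psi)$ equals the count of the exceptional fibres minus that of the singular points they replace. These fibres are built from toric and rational (projective-space) pieces, so the resolution contributes only algebraic cycle classes, whose Frobenius eigenvalues are powers $q^i$ with $i\ge 1$ and hence slope $\ge 1$; this is visible in the quintic as the extra poles $(1-pT)^{100}(1-p^2T)^{100}$ of \eqref{eq:zeta_Wpsi}. Therefore $N_r(W_\psi)-N_r(Y_\psi)\equiv 0\bmod q^r$, which closes the chain of congruences and, by the reduction of the first paragraph, yields $Z_{[0,1)}(X_\psi,T) = Z_{[0,1)}(Y_\psi,T) = Z_{[0,1)}(W_\psi,T)$.
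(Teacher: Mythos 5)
The paper itself does not prove this theorem: it records it as the main result of \cite{WAN}, saying only that ``a character formula calculation gives'' it, so the relevant comparison is with Wan's argument, and your outline does follow the same route --- expand $N_r(X_\psi)$ in Gauss sums, sort the terms by characters of the diagonal group $G$, identify the $G$-invariant piece with the toric count for the singular mirror $Y_\psi$, kill the rest modulo $q^r$ by slope estimates, and pass to $W_\psi$ through the resolution. The genuine gap is that the decisive step is never carried out: you explicitly defer the uniform bound $\mathrm{ord}_q\,N_{\vec v,r}\geq r$ for every non-invariant monomial class, calling it ``the technical crux.'' That divisibility \emph{is} the content of the congruence $N_r(X_\psi)\equiv N_r(Y_\psi)\bmod q^r$; everything else in your sketch is bookkeeping. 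Invoking Stickelberger/Gross--Koblitz names the right tool, but one must actually verify the combinatorial inequality on base-$p$ digit sums for every twist $m$, every non-invariant $\vec v$, and every extension degree $r$ (the Gauss sums live over $k_r$, so the digit sums lengthen with $r$ and uniformity is not free), and also handle the boundary strata where coordinates vanish before the torus count matches $Y_\psi$. A proposal that stops at ``this is where the digit-sum bound enters'' has located the theorem, not proved it.

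Two smaller soft spots. First, your opening equivalence is only half-justified: slope $\geq 1$ eigenvalues contributing $0$ bmod $q^r$ gives the direction (zeta equality $\Rightarrow$ congruences), but the converse --- that the full family of congruences forces the slope-$<1$ eigenvalues to agree as multisets --- is a genuine uniqueness lemma (from $\sum_i \pm\alpha_i^r \equiv 0 \bmod q^r$ for all $r$ one must deduce cancellation of all slope-$<1$ terms), which Wan proves and you assert. Second, in the resolution step your claim that the exceptional contribution has ``Frobenius eigenvalues $q^i$ with $i\geq 1$'' is literally false: the exceptional locus contributes an $H^0$-type eigenvalue $1$, which is exactly what cancels against the stratum it replaces; moreover the singular locus of $Y_\psi$ is positive-dimensional in general (for the quintic mirror there are curves of $A$-type singularities, not isolated points), so what one actually needs is $\#E(k_r)\equiv \#S(k_r)\bmod q^r$ for the exceptional locus $E$ fibered over the singular locus $S$ in toric pieces with $1+q(\cdots)$ points, together with the fact that the resolution and this stratification are defined over $k$. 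Both points are fixable along the lines you indicate, but as written they are asserted rather than established.
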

In fact, if $q \equiv 1 \text{ mod } n$ or if $n$ is prime, it is
shown in \cite{WAN} and \cite{HAESSIG}, respectively, that
\[
Z(X_\psi,T) = \frac{ \left( Q(T,\psi) R(q^s T^s)\right)^{(-1)^{n-1}}}{(1-T) (1 - qT) \ldots (1-q^{n-2}T)},
\]
where $s$ is the order of $q$ in $(\Z/n\Z)^n$, as well as
\[
Z(Y_\psi, T) = \frac{ Q(T,\psi)^{(-1)^{n-1}}}{(1-T) (1 - qT) \ldots (1-q^{n-2}T)}.
\]
A natural question to ask is whether, analogously to the quintic case,
the polynomial $R(T,\psi)$ can be shown to contain terms appearing in
zeta functions of other varieties. As remarked by Wan in \cite{WAN},
in addition to the $n = 5$ case, this question was answered
affirmatively in the cases $n = 3$ and $n = 4$ by Dwork.  Relying on a
result of Haessig \cite{HAESSIG} and Gauss sum calculations, Goutet
\cite{GOUTET2} has found explicit varieties whose zeta functions
have terms appearing in $R(T,\psi)$. Specifically, if we define
$N_R(q^r)$ by
$R(T,\psi) = \exp\left( \sum_{r > 0} N_R(q^r) \frac{T^r}{r} \right)$,
Goutet proves the following.
\begin{theorem}\label{thm:goutet-hypergeometric-factors}
  Let $n \geq 5$ be a prime congruent to $1$ modulo $n$. Then,
  \[
  N_R(q^r) = q^{\frac{n-5}{2}} N_1(q^r) + q^{\frac{n-7}{2}}N_3(q^r) + \ldots + N_{n-4}(q^r),
  \]
  where each $N_i(q^r)$ is equal to the sum of counts of points of
  certain varieties of hypergeometric type.
\end{theorem}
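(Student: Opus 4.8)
The plan is to make the Gauss-sum expression for $N_R(q^r)$ completely explicit and then reorganize it into point counts of superelliptic curves. First I would invoke Haessig's computation \cite{HAESSIG}, which---extending the character-formula method used for the quintic---expresses the generating function $R(T,\psi)=\exp\!\left(\sum_{r>0}N_R(q^r)T^r/r\right)$ in terms of Gauss sums over $\F_{q^r}$. Since $R$ is exactly the factor of $Z(X_\psi,T)$ that is absent from $Z(Y_\psi,T)$, the quantity $N_R(q^r)$ collects the contributions to $N_r(X_\psi)$ coming from the non-$G$-invariant monomial classes $\vec v$, each of which is a ratio of Gauss sums generalizing the quintic coefficient $\beta_{\vec v,m}=p^4 G_{5m}/\prod_i G_{m+kv_i}$. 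The hypothesis $q\equiv 1\pmod{n}$ guarantees that the order-$n$ multiplicative characters entering these sums are defined over $\F_q$, while the primality of $n$ forces the non-invariant classes to organize into clean families indexed by the odd integers $1,3,\ldots,n-4$.

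Next I would simplify the Gauss-sum products using the reflection identity (\ref{eq:gauss-sum-identity}) together with the Hasse--Davenport relation. Hasse--Davenport propagates the base-field identities to every extension $\F_{q^r}$, which is what makes the resulting expression recognizable as the point count of a \emph{fixed} variety evaluated at $q^r$, uniformly in $r$; the reflection identity collapses complementary pairs of Gauss sums and is the source of the powers of $q$ that appear as coefficients. Grouping the surviving terms according to how many such collapses occur is what should produce the weights $q^{(n-5)/2},q^{(n-7)/2},\ldots,1$ attached to the successive summands.

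I would then identify each reduced group with an explicit variety. The targets are the superelliptic curves $y^n=\prod_i(x-a_i)^{e_i}$---the natural generalizations of the quintic curves $A\colon y^5=x^2(1-x)^3(x-\psi^5)^2$ and $B\colon y^5=x^2(1-x)^4(x-\psi^5)$ treated by Goutet in \cite{GOUTET1}---whose point counts over $\F_{q^r}$ are given, by the classical Jacobi-sum formula of Weil, precisely by the Gauss-sum products surviving the previous step. Matching these two descriptions term by term should show that the $i$-th group equals $q^{(n-4-i)/2}N_i(q^r)$, where $N_i(q^r)$ is (a sum of) point counts of such hypergeometric curves, yielding the asserted decomposition.

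The hard part will be the combinatorial and representation-theoretic bookkeeping underlying the second and third steps: one must partition the non-invariant character tuples into the correct families, verify that each family's Gauss-sum product matches the Jacobi-sum expression for exactly one hypergeometric curve up to the stated power of $q$, and confirm that the number of collapsed complementary pairs is exactly $(n-4-i)/2$ for the $i$-th family. Carrying this out uniformly in the extension degree $r$---so that a single collection of varieties works simultaneously for all $r$---is precisely where the Hasse--Davenport relation and the primality of $n$ do the essential work, and where the careful tracking of the $q$-exponents is most delicate.
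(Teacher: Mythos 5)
The paper offers no proof of this theorem: it is quoted from Goutet \cite{GOUTET2}, so your proposal must be measured against Goutet's cited argument. Your first two steps do match its architecture. Goutet does start from Haessig's Gauss-sum expression for $R(T,\psi)$ (valid under $q\equiv 1 \bmod n$, which is how you correctly read the garbled hypothesis ``a prime congruent to $1$ modulo $n$'' in the statement), and he does use the reflection identity (\ref{eq:gauss-sum-identity}) together with Hasse--Davenport to propagate base-field identities to all extensions $\F_{q^r}$, which is exactly what lets one recognize the surviving character sums as point counts of fixed varieties evaluated at $q^r$.

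The genuine gap is in your third step. You take as targets only superelliptic \emph{curves} $y^n=\prod_i(x-a_i)^{e_i}$, generalizing the quintic curves $A$ and $B$ of \cite{GOUTET1}. The indexing of the theorem already rules this out: the terms $N_1, N_3,\ldots, N_{n-4}$ are point counts of hypergeometric varieties of odd \emph{dimensions} $1,3,\ldots,n-4$, and only for $n=5$ does the list collapse to curves (where indeed $N_R=N_1$ is carried by $A$ and $B$). The mechanism is visible in your own weight bookkeeping: after the reflection-identity collapses, each surviving factor is a generalized Jacobi sum in some number $m$ of multiplicative characters, and by Weil's formula such a product is the middle-cohomology Frobenius contribution of an $(m-1)$-dimensional hypergeometric variety, of the shape $y^n=x_1^{c_1}\cdots x_i^{c_i}\bigl(1-x_1-\cdots-x_i\bigr)^{c_0}$; each collapse removes a \emph{pair} of characters and emits a factor of $q$, so more collapses pair a higher power of $q$ with a \emph{lower-dimensional} variety. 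That is precisely the grading $q^{(n-4-i)/2}N_i(q^r)$, with dimensions descending in steps of two from $n-4$ to $1$. A curve can only absorb Jacobi sums in very few characters, so for $n\geq 7$ the term-by-term match you propose cannot be carried out: the bookkeeping you defer to the ``hard part'' is not merely delicate but structurally forced to produce higher-dimensional varieties. With the targets corrected from curves to hypergeometric varieties of the appropriate odd dimensions, your outline becomes essentially Goutet's proof.
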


\subsection{Cohomological Interpretation}

While Gauss sum techniques allow us to test and prove conjectures
about zeta functions of mirror manifolds, they do not provide a
conceptual understanding of what is happening.  We have already
mentioned the Kadir-Yui monomial-motive correspondence \cite{KYUIMM}
which begins to provide a theoretical explanation. Kloosterman
\cite{KLOOSTERMAN} extends this result to a neighborhood of the Fermat
fiber, but also considers more general families. In particular, let
$k = \F_q$ be a finite field and consider the family consisting of
hypersurfaces
\begin{equation}\label{eq:fermat-monomial-deformation}
X_{\bar \lambda} \colon \left\{ F_{\bar \lambda} = \sum_{i=0}^n x_i^{d_i} + \bar \lambda \prod_i x_i^{a_i} = 0 \right \}
\end{equation}
in weighted projective space $\P(w):=\P_k(w_0,\ldots,w_n)$, where
$w_i d_i = d$, $a_i \geq 0$, $\gcd(q,d)=1$, and $\sum w_i a_i = d$.
We will also only work with nonsingular fibers in what follows.
If $U_{\bar \lambda} = \P(w) \setminus X_{\bar \lambda}$
denotes the complement of a generic member of this family, then
\[ 
Z(X_{\bar \lambda},T) Z(U_{\bar \lambda}, T) = Z(\P(w),T),
\]
and we can work with $U_{\bar \lambda}$ instead of $X_{\bar \lambda}$
for the purposes of determining the zeta function of
$X_{\bar \lambda}$. One reason for doing so is that there is a
$p$-adic cohomology theory resembling de Rham cohomology called
\emph{Monsky-Washnitzer cohomology} that is well understood on
hypersurface complements. To work with Monsky-Washnitzer cohomology,
we need to lift $X_{\bar \lambda}$ to a $p$-adic context.  Let
$\lambda$ be the Teich\"uller lift of $\bar \lambda$ to the fraction
field $\Q_q$ of the ring of Witt vectors over $k = \F_q$ (which equals
$\Z_p$ if $q = p$). We can then consider $F_\lambda$ to have
coefficients in $\Q_q$, and work with $X_\lambda$ and $U_\lambda$
defined in the obvious way over $\Q_q$.  Cohomology classes of
Monsky-Washnitzer cohomology $H_{MW}^*(U_\lambda,\Q_q)$ are given by
differential forms with $\Q_q$ coefficients, and these groups possess
an action of Frobenius. It turns out that cohomology is zero except
in degree $n$ and degree $0$, where it is one-dimensional with 
trivial action of Frobenius. From this it can be shown that
\[
Z(U_{\bar \lambda}, T) = \frac{\left( \det \left(  I - q^n(\Frob_q^*)^{-1}T \mid H_{MW}^n(U_\lambda, \Q_q)   \right) \right)^{(-1)^{n+1}}}{(1-q^nT)}.
\] 
By a result of Katz \cite{KATZ}, $(\Frob_q^*)^{-1}$ can be given by
$A(\lambda)^{-1} \Frob_{q,0}^* A(\lambda^q)$ extended via $p$-adic
analytic continuation to a small disc around $\lambda = 0$, where
$\Frob_{q,0}^*$ is the action of Frobenius on the $\lambda = 0$ fiber,
and $A(\lambda)$ is a solution of the Picard-Fuchs equation associated
with the family $X_\lambda$.  Therefore, to determine the zeta
function of $X_{\bar \lambda}$, we need to understand the action of
Frobenius on the Fermat fiber, and to compute the Picard-Fuchs
equation of the deformed family.  Finding the latter and showing it is
hypergeometric is one of the main results of \cite{KLOOSTERMAN}.
Additionally, Kloosterman shows that there is a factorization of the
zeta function along the lines of the Kadir-Yui monomial-motive
correspondence \cite{KYUIMM}. These ideas were also exploited 
to calculate zeta functions of certain K3 surfaces in \cite{GOKLYU}.

An alternative theoretical approach, in terms of \'Etale
cohomology, is given by Goutet in \cite{GOUTET3}. For a
nonsingular element $\bar X_\psi = X_\psi \times_k \bar k$ of this
family considered over $\bar k$, it can be shown that
$H_{\textnormal{et}}(\bar X_\psi, \Q_\ell)$ is zero for $i > 2n-4$ and
$i < 0$, as well as for odd $i \neq n-2$. For the remaining even
$i \neq n-2$ these groups are $1$-dimensional. The most interesting
part of cohomology is thus the primitive part of
$H_{\textnormal{et}}^{n-2}(\bar X_\psi, \Q_\ell)$, since it can be
shown that the action of Frobenius is multiplication by $q^{(n-2)/2}$ on the
non-primitive part of
$H_{\textnormal{et}}^{n-2}(\bar X_\psi, \Q_\ell)$, and 
multiplication by $q^i$ on each
$H_{\textnormal{et}}^{2i} (\bar X_\psi, \Q_\ell)$. It follows that
\[
Z(X_\psi,T) = \frac{\left( \det \left( I - T \Frob^* \mid H_{\textnormal{et}}^{n-2}(\bar X_\psi, \Q_\ell)^{\textnormal{prim}} \right) \right)^{(-1)^{n-1}}}{(1-T)(1-qT)\ldots(1-q^{n-2}T)}.
\]
Goutet shows that $H_{\textnormal{et}}(\bar X_\psi, \Q_\ell)^{\textnormal{prim}}$ 
decomposes into a direct sum of linear subspaces which correspond to 
equivalence classes of irreducible representations of the group of 
automorphisms of $X_\psi$ acting on cohomology. Frobenius stabilizes
each of these subspaces, and the zeta function inherits a factor from each
summand. The resulting factorization is finer than the one
given in \cite{KLOOSTERMAN}, and Goutet relates this factorization
to the one resulting from Theorem~\ref{thm:goutet-hypergeometric-factors}
in a recent preprint \cite{GOUTET4}. An interesting question is whether
these factors can be explained geometrically in the context of mirror symmetry.

\section*{Acknowledgments}

The author would like to thank the anonymous referee for helpful
remarks that resulted in large improvements to this document.  Thanks
is also due to Professor Noriko Yui for helpful suggestions and
tireless encouragement during the preparation of this manuscript.  The
author's work is supported by the Natural Sciences and Engineering
Research Council (NSERC) of Canada through the Discovery Grant of
Noriko Yui. The author held a visiting position at the Fields
Institute during the preparation of these notes, and would like to
thank this institution for its hospitality.

\bibliography{notes}{}

\begin{thebibliography}{10}
\providecommand{\url}[1]{{#1}}
\providecommand{\urlprefix}{URL }
\expandafter\ifx\csname urlstyle\endcsname\relax
  \providecommand{\doi}[1]{DOI~\discretionary{}{}{}#1}\else
  \providecommand{\doi}{DOI~\discretionary{}{}{}\begingroup
  \urlstyle{rm}\Url}\fi

\bibitem{CANMS}
{Candelas}, P., {de la Ossa}, X., {Green}, P.S., {Parkes}, L.: {A pair of
  Calabi-Yau manifolds as an exactly soluble superconformal theory}.
\newblock Nuclear Physics B \textbf{359}(1), 21--74 (1991)

\bibitem{CORV1}
{Candelas}, P., {de la Ossa}, X., {Rodriguez-Villegas}, F.: {Calabi-Yau
  manifolds over finite fields, I}.
\newblock arXiv preprint hep-th/0012233  (2000)

\bibitem{CORV2}
{Candelas}, P., {de la Ossa}, X., {Rodriguez-Villegas}, F.: {Calabi-Yau
  manifolds over finite fields, II}.
\newblock Fields Institute Communications \textbf{38}, 121--157 (2003)

\bibitem{COKA}
{Cox}, D.A., {Katz}, S.: {Mirror Symmetry and Algebraic Geometry}.
\newblock American Mathematical Soc. (1999)

\bibitem{DOLG}
{Dolgachev}, I.: {Weighted Projective Varieties}.
\newblock In: {Group Actions and Vector Fields}, pp. 34--71. Springer (1982)

\bibitem{DWO}
{{Dwork}, Bernard}: {Generalized Hypergeometric Functions}.
\newblock {Oxford Mathematical Monographs}. Oxford University Press (1990)

\bibitem{GAHRS}
{G{\"a}hrs}, S.: {Picard--Fuchs Equations of Special One-Parameter Families of
  Invertible Polynomials}.
\newblock Springer (2013)

\bibitem{GOKLYU}
{Goto}, Y., {Kloosterman}, R., {Yui}, N.: {Zeta-functions of certain K3-fibered
  Calabi--Yau threefolds}.
\newblock International Journal of Mathematics \textbf{22}(01), 67--129 (2011)

\bibitem{GOUTET2}
{Goutet}, P.: {An explicit factorisation of the zeta functions of Dwork
  hypersurfaces}.
\newblock Acta Arithmetica \textbf{144}(3), 241--261 (2010)

\bibitem{GOUTET1}
{Goutet}, P.: On the zeta function of a family of quintics.
\newblock Journal of Number Theory \textbf{130}(3), 478--492 (2010)

\bibitem{GOUTET3}
{Goutet}, P.: {Isotypic decomposition of the cohomology and factorization of
  the zeta functions of Dwork hypersurfaces}.
\newblock Finite Fields and Their Applications \textbf{17}(2), 113--147 (2011)

\bibitem{GOUTET4}
{Goutet}, P.: {Link between two factorizations of the zeta functions of Dwork
  hypersurfaces}.
\newblock preprint  (2014)

\bibitem{GRHA}
{Griffiths}, P., {Harris}, J.: {Principles of Algebraic Geometry}, vol.~52.
\newblock John Wiley \& Sons (2011)

\bibitem{GRIF1}
{Griffiths}, P.A.: {On the periods of certain rational integrals: I}.
\newblock Annals of Mathematics pp. 460--495 (1969)

\bibitem{GRIF2}
{Griffiths}, P.A.: {On the periods of certain rational integrals: II}.
\newblock Annals of Mathematics pp. 496--541 (1969)

\bibitem{HAESSIG}
{Haessig}, C.D.: Equalities, congruences, and quotients of zeta functions in
  arithmetic mirror symmetry.
\newblock In: {Mirror Symmetry V}, \emph{{AMS/IP Studies in Advanced
  Mathematics}}, vol.~38, pp. 159--184. {American Mathematical Society} (2007)

\bibitem{HARTSHORNE}
{Hartshorne}, R.: {Algebraic Geometry}.
\newblock Springer (1977)

\bibitem{KATZ}
{Katz}, N.M.: On the differential equations satisfied by period matrices.
\newblock Publications Math{\'e}matiques de l'IH{\'E}S \textbf{35}(1), 71--106
  (1968)

\bibitem{KLOOSTERMAN}
{Kloosterman}, R.: {The zeta function of monomial deformations of Fermat
  hypersurfaces}.
\newblock Algebra \& Number Theory \textbf{1}(4), 421--450 (2007)

\bibitem{KOBLITZ}
{Koblitz}, N.: {$p$-adic Numbers, $p$-adic Analysis, and Zeta-Functions}, 2nd
  edn.
\newblock Springer (1984)

\bibitem{LLY}
{Lian}, B., {Liu}, K., {Yau}, S.T.: {Mirror principle I}.
\newblock arXiv preprint alg-geom/9712011  (1997)

\bibitem{MOR}
{Morrison}, D.R.: {Picard-Fuchs equations and mirror maps for hypersurfaces}.
\newblock arXiv preprint alg-geom/9202026  (1992)

\bibitem{SCHSHA}
{Schwarz}, A., {Shapiro}, I.: {Twisted de Rham cohomology, homological
  definition of the integral and ``Physics over a ring''}.
\newblock Nuclear Physics B \textbf{809}(3), 547--560 (2009)

\bibitem{SHA}
{Shapiro}, I.: {Frobenius map for quintic threefolds}.
\newblock International Mathematics Research Notices \textbf{2}(13), 2519--2545
  (2009)

\bibitem{VOIS1}
{Voisin}, C.: {Hodge Theory and Complex Algebraic Geometry I}, vol.~1.
\newblock Cambridge University Press (2008)

\bibitem{WAN}
{Wan}, D.: {Mirror symmetry for zeta functions}.
\newblock AMS/IP Studies in Advanced Mathematics \textbf{38}, 159--184 (2006)

\bibitem{KYUIMM}
{Yui}, N., {Kadir}, S.: {Motives and mirror symmetry for Calabi-Yau orbifolds}.
\newblock In: {Modular Forms and String Duality}, \emph{Fields Institute
  Communications}, vol.~54, pp. 3--46. American Mathematical Society (2008)

\end{thebibliography}
\bibliographystyle{spmpsci.bst}
\end{document}